\newcommand{\pmat}[1]{\begin{pmatrix} #1 \end{pmatrix}} 
\newcommand{\dto}{\stackrel{{\rm d}}{\longrightarrow}}
\renewcommand{\tilde}{\widetilde}
\renewcommand{\bar}{\overline}
\newcommand{\bE}{\ensuremath{\mathbb{E}}}
\newcommand{\bN}{\ensuremath{\mathbb{N}}}
\newcommand{\bP}{\ensuremath{\mathbb{P}}}
\newcommand{\bR}{\ensuremath{\mathbb{R}}}
\newcommand{\cA}{\ensuremath{\mathcal{A}}}
\newcommand{\cB}{\ensuremath{\mathcal{B}}}
\newcommand{\cF}{\ensuremath{\mathcal{F}}}
\newcommand{\cL}{\ensuremath{\mathcal{L}}}
\newcommand{\cR}{\ensuremath{\mathcal{R}}}
\theoremstyle{plain}
\newtheorem{Thm}{Theorem}[section]
\newtheorem{Lem}[Thm]{Lemma}
\newtheorem{Prop}[Thm]{Proposition}
\newtheorem{Cor}[Thm]{Corollary}
\theoremstyle{definition}
\newtheorem{Ass}[Thm]{Assumption}
\newtheorem{Def}[Thm]{Definition}
\newtheorem{Rem}[Thm]{Remark}
\newtheorem{Ex}[Thm]{Example}
\newcommand{\Proof}[2][Proof]{\begin{proof}[{#1}] #2 \end{proof}}
\numberwithin{equation}{section}
\begin{document}

\begin{center}
{\Large \bf 
Multiray generalization of the arcsine laws for occupation times of infinite ergodic transformations
}
\end{center}
\begin{center}
Toru Sera and Kouji Yano
\end{center}

\begin{abstract}
We prove that the joint distribution of the occupation time ratios for ergodic transformations preserving an infinite measure converges {\color{black}to a multidimensional version of Lamperti's generalized arcsine distribution, in the sense of strong distributional convergence}. 
Our results can be applied to interval maps and Markov chains.
We adopt the double Laplace transform method, which has been utilized in the study of occupation times of diffusions on multiray.
We also discuss the inverse problem. 
\end{abstract}


\section{Introduction}

Let $B=(B_t)_{t\geq0}$ be a one-dimensional standard Brownian motion with $B_0=0$. Then the occupation time ratio on the positive side for $B$  up to a fixed time $t>0$ is arcsine distributed, i.e., for any $y\in[0,1]$,  
\begin{align*}
  \bP\bigg[t^{-1}\int_0^t 1_{(0,\infty)}(B_s)ds\leq y\bigg]
  =
  \frac{2}{\pi}{\rm arcsin}\sqrt{y}.	
\end{align*}
In addition, we have a similar result in discrete time. Let $R=(R_k)_{k\geq0}$ be a one-dimensional simple symmetric random walk. Then the occupation time ratio on the positive side for $R$ up to $n-1$ converges in distribution to arcsine distribution as $n\to\infty$, i.e., for any $y\in[0,1]$,
\begin{align}\label{arcsine}
  \bP\bigg[n^{-1}\sum_{k=0}^{n-1} 1_{(0,\infty)}(R_k) \leq y\bigg]
  \to 
  \frac{2}{\pi}{\rm arcsin}\sqrt{y}.
  \tag{$\ast$}
\end{align}
These results are well-known as L\'evy's arcsine laws for occupation times (\S5 of \cite{Le}; see also Theorem III.4.2 of \cite{F1}). As a generalization of (\ref{arcsine}),  
Lamperti \cite{La} studied the class of discrete-time Markov processes $Z=(Z_k)_{k\geq0}$ on $\;A_1+\{0\}+A_2$, where the plus signs always mean the disjoint union, and where $A_1$ and $A_2$ will be called the \emph{rays}, having the following property: 
\emph{when $Z$ changes rays, it must visit the origin $0$}, i.e.,
the condition {\color{black}[}$Z_n \in A_i$ and $Z_m \in A_j$ for some $n<m$ and 
$i \neq j$ {\color{black}]} implies the existence of $n<k<m$ for which $Z_k = 0$. He obtained the sufficient and necessary condition of the existence of limit distributions of $n^{-1}\sum_{k=0}^{n-1}1_{A_1}(Z_k)$, and determined the class of possible limit distributions, which are called Lamperti's generalized arcsine distributions.
 
Partly inspired by \cite{La}, Thaler \cite{T02} proved 2-ray generalized arcsine laws for interval maps. We now explain a typical example of his result.  

\begin{Ex}[arcsine law for Boole's transformation (\cite{T02}, pp.1923--1924)]
Let $T:\bR\to\bR$ be defined by $Tx:=x-x^{-1}\;(x\neq0), \;T0:=0$. 
The map $T$ is called \emph{Boole's transformation}. It is known that $T$ is a conservative, ergodic and measure preserving transformation w.r.t.$\:$the Lebesgue measure on $\bR$ (this notion will be explained in Subsection \ref{subsec:notation}).   
Note that, \emph{when the orbit $(T^k x)_{k\geq 0}$  moves from  $(-\infty,-1)$ to $(1,\infty)$ or vice versa, it must visit $[-1,1]$}. So the state space $\bR$ can be decomposed as $\bR=(-\infty,-1)+[-1,1]+(1,\infty)$, which may be regarded as a $2$-ray: the two \emph{rays} $(-\infty,-1)$ and $(1,\infty)$ radiate from the \emph{junction} $[-1,1]$. 
We will denote by $S^+_n(x)$  the amount of times {\color{black}which} $(T^k x)_{k\geq0}$ {\color{black}spends} on $(1,\infty)$ up to $n-1$:
\begin{align*}
  S^+_n(x):=\sum_{k=0}^{n-1} 1_{(1,\infty)}(T^k x).	
\end{align*}
Then, for any probability measure $\nu$ absolutely continuous w.r.t.$\:$the Lebesgue measure on $\bR$, and for any $y\in[0,1]$, as $n\to\infty$,
\begin{align*}
 \nu\Big(x\in\bR\;;\;S^+_n(x)/n\leq y\Big) 
  \to 
  \frac{2}{\pi}{\rm arcsin}\sqrt{y},	
\end{align*}
i.e.,
\begin{align*}
	S^+_n/n \quad\text{{\color{black}(}under $\nu$\color{black}{)}}\; \dto
	\;\text{the arcsine distribution}, 
\end{align*}
where $\dto$ means the distributional convergence.
\end{Ex}

Thaler--Zweim\"uller \cite{TZ} {\color{black}and Zweim\"uller \cite{Z07}}  showed 2-ray generalized arcsine laws for infinite ergodic transformations as an extension of \cite{T02}.
In \cite{T02}, \cite{TZ} {\color{black}and \cite{Z07}}, their proofs were based on the moment method.

In this paper, we prove a joint distributional limit theorem as a multiray extension of {\color{black}\cite{Z07}}.
For this purpose, the moment method does not seem to be suitable. Instead, we adopt the double Laplace transform method, which was utilized in, e.g., Barlow--Pitman--Yor \cite{BPY}, Watanabe \cite{W95}, and Y. Yano \cite{Y17} for studies of generalized arcsine laws for diffusions.
These studies were based on the renewal property of excursions away from the origin, and so we cannot reduce our problem to these results because of lack of this property. 
We can nevertheless utilize the double Laplace transform of occupation times. We decompose it with respect to the length of the first excursion from the junction, use the stationarity and asymptotic recursions and finally show that the {\color{black}second leading} terms can be represented by the double Laplace transform of occupation times and the Laplace transforms of {\color{black}the differences of} wandering rates.
We now illustrate our main theorems by an example of interval maps.

\begin{Ex}[{\color{black}multidimensional} generalized arcsine law for interval map with indifferent fixed points]
We define the map $T:[0,1]\to[0,1]$ by
\begin{align*}
Tx:=
\begin{cases}
 x+c_1x^3,               &0\leq x \leq \frac{1}{3}, \\
 x+c_2(x-\frac{1}{2})^3, &\frac{1}{3}<x<\frac{2}{3}, \\
 x+c_3(x-1)^3,           &\frac{2}{3}\leq x\leq 1.
 \end{cases}
\end{align*}
We take $c_1=c_3=18$ and $c_2=72$ so that $T(\frac{i}{3}-)=1$ and $T(\frac{i}{3}+)=0$ for each $i$. 
\begin{figure}
\begin{center}
 \includegraphics[width=7cm]{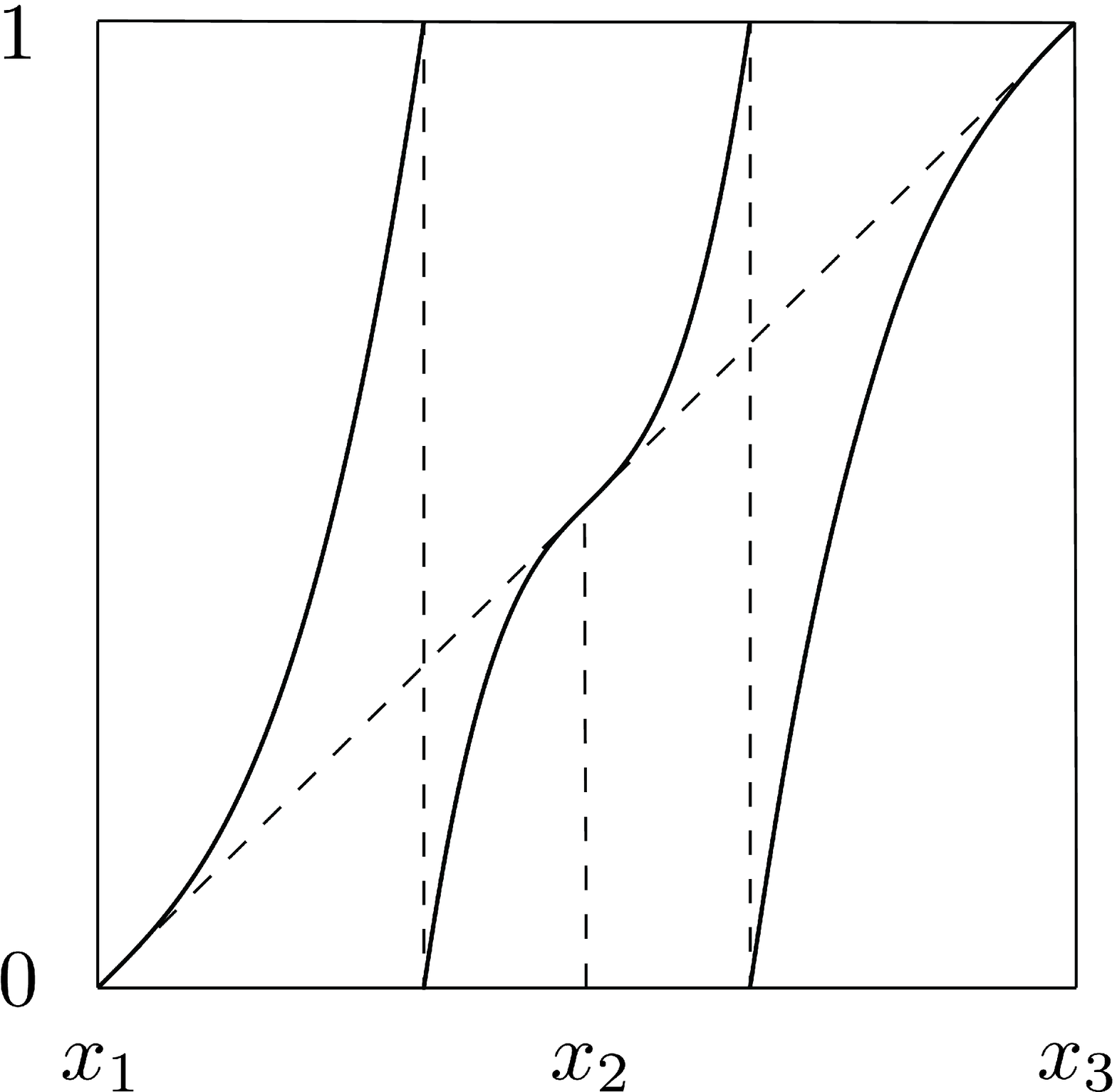}
\end{center}
\end{figure}
Note that $x_1=0,\:x_2=1/2,\:x_3=1$ are indifferent fixed points; in fact, for each $i=1,2,3$,
\begin{align*}
 	&\quad\quad\;\;\;
 	Tx_i=x_i, \quad T'x_i=1, \quad\text{and}
 	\\
 	&(x-x_i)T''x>0 \;\;\text{for any $x\neq x_i$ near $x_i$}.
 \end{align*}
Then $T$ has the unique (up to multiplication of positive
  constants) invariant measure $\mu$ on $[0,1]$ equivalent to the Lebesgue measure with $\mu([0,1])=\infty$,
   and $T$ is a conservative, ergodic, measure preserving transformation on $([0,1], \cB([0,1]),\mu)$ (see \cite{T80} and \cite{T83}).
Let $A_i$'s be disjoint small neighborhoods of $x_i$'s, respectively, and take $Y:=[0,1]\setminus \sum_{i=1}^3 A_i$. 
We will call $A_i$ the \emph{$i$-th ray} and $Y$ the \emph{junction}. We can take the rays sufficiently small so that, \emph{when the orbit $(T^k x)_{k\geq 0}$ changes rays, it must visit the junction}.  So $(T^k x)_{k\geq0}$ may be regarded as a process on $3$-ray $\sum_{i=1}^3 A_i+Y$. 
We will denote by $S^{(i)}_n (x)$ the amount of times {\color{black}which} $(T^k x)_{k\geq 0}$ {\color{black}spends} on the $i$-th ray up to $n-1$:
\begin{align*}
	S^{(i)}_n :=\sum_{k=0}^{n-1}1_{A_i}\circ T^k.
\end{align*}
Since $\mu(A_i)=\infty$ and $\mu(Y)<\infty$, by infinite ergodic theory (see Remark \ref{rem:birkoff}), 
\begin{align*}
 \sum_{i=1}^{3} S^{(i)}_n/n \to 1,
 \quad\text{$\mu$-a.e., as $n\to\infty$.}	
\end{align*}
We are interested in finding the limit of $(S^{(1)}_n,S^{(2)}_n,S^{(3)}_n)/n$ in a certain distributional sense. Our results show as follows: for any probability measure $\nu$ on $[0,1]$ absolutely continuous w.r.t.$\;\mu$ ({\color{black}equivalently}, w.r.t.$\;$the Lebesgue measure on $[0,1]$), 
\begin{align*}
  \frac{(S^{(1)}_n,S^{(2)}_n,
  S^{(3)}_n)}{n}
   \quad\text{{\color{black}(}under $\nu${\color{black})}}\;
   \dto\;
  \frac{(\xi_1,\xi_2,\xi_3)}{\sum_{i=1}^3\xi_i},
  \quad\text{as $n\to\infty$},
\end{align*}
where $\xi_1, \xi_2, \xi_3$ are $\color{black}[0,\infty)$-valued independent random variables with the one-sided $1/2$-stable distributions characterized by
\begin{align*}
   \bE\big[\exp(-\lambda\xi_i)\big] 
    = \exp(-\beta_i\sqrt{\lambda}), 
   \quad \lambda>0,\;i=1,2,3,
\end{align*}
for some constants $\beta_1,\beta_2,\beta_3>0$. The $\beta_i$'s are given by
\begin{align*}
\beta_i=
\begin{cases}
	c_i^{-1/2}\sum_{j\neq i}(h \circ f_j)(x_i)f_j'(x_i), &i=1,3,\\
	2c_i^{-1/2}\sum_{j\neq i}(h \circ f_j)(x_i)f_j'(x_i), &i=2,
\end{cases}
\end{align*}
where $h$ {\color{black}denotes} a lower semicontinuous version of $d\mu/dx$, and $f_i$ {\color{black}denotes} a $C^2$-extension over $[0,1]$ of the inverse of $T|_{(\frac{i-1}{3},\frac{i}{3})}$.
 For more details, see Subsection \ref{subsec:interval}.
\end{Ex}

The present paper is organized as follows. 
In Section \ref{sec:main}  we set up notation and terminology, state our general limit theorems, and give their applications to interval maps and Markov chains.
In Section \ref{sec:dob-lap}, we discuss several facts about the double Laplace transforms and strong convergence in distribution.
Section \ref{sec:integrating} is devoted to the study of  asymptotic recursions for ergodic transformations.
In Sections \ref{sec:pf} and \ref{sec:pf2}, we give the proofs to the direct and inverse limit theorems, respectively.

\subsection*{Acknowledgements}
The authors thank Takuma Akimoto for drawing their attention to the paper \cite{TZ}. They thank Christophe Profeta for his valuable suggestions on Subsection \ref{subsec:markov}.
They also thank the referee for his suggestion which helped improve Theorems \ref{thm:main} and \ref{thm:main2}.
The second author, Kouji Yano, was supported
by JSPS-MAEDI Sakura program and KAKENHI 26800058
and partially by KAKENHI 24540390 and KAKENHI 15H03624.

 
\section{Main results}\label{sec:main}

\subsection{Notations {\color{black}and assumptions}}\label{subsec:notation}

Let $(X,\cA,\mu)$ be a {\color{black}$\sigma$-finite measure space} such that $\mu(X)=\infty$, 
and let $T:(X,\cA,\mu) \to (X,\cA,\mu)$ be a conservative, ergodic, measure preserving transformation, which is abbreviated {\color{black}as} \textit{CEMPT} (cf.$\;$\S1.0--1.3 of Aaronson \cite{A97}). 
We know from Proposition 1.2.2 of \cite{A97} that, if $T$ is a measure preserving transformation on $(X,\cA,\mu)$, then the condition  that $T$ is conservative and  ergodic is equivalent to the condition that for any $A\in \cA$ with $\mu(A)>0$,
\begin{align*}
	\sum_{n\geq0} 1_A \circ T^n
	= \infty, \quad\textrm{$\mu$-a.e.}
\end{align*}
The \textit{dual operator} 
$\widehat{T}:L^1(\mu) \to L^1(\mu)$ is characterized by 
$\int (\widehat{T}f)g \:d\mu= \int f(g \circ T)\:d\mu$ 
for any $f \in L^1(\mu)$ and $g \in L^\infty(\mu)$. 
Note that if $0\leq f \leq 1$ then $0 \leq \widehat{T}f \leq 1$, $\mu$-a.e.

We always denote the disjoint unions using plus signs; e.g., $A_1+A_2$, $\sum_i A_i$ and so on.
We will say that $Y\in\cA$ \textit{dynamically separates} $A_1, \dots, A_d\in\cA$ if the condition {\color{black}[}$x \in A_i$ and $T^n x\in A_j$ for some 
$i \neq j$ and  $\color{black}n\geq1${\color{black}]} implies the existence of some $k\in\{1,\dots,n-1\}$ for which $T^{k}x \in Y$.

{\color{black}
\begin{Ass}\label{ass:ray}
Let $d\geq2$ be a positive integer. Let $X$ be decomposed into $X = Y +\sum_{i=1}^d A_i$ for 
$Y\in\cA$ with $\mu(Y) \in (0,\infty)$ and $A_i\in \cA$ with $\mu(A_i)=\infty$ ($i=1,\dots,d$), such that $Y$ dynamically separates $A_1,\dots,A_d$.
\end{Ass}}

{\color{black}We assume Assumption \ref{ass:ray} from now on.}
We define 
\begin{align}
  \psi(x)&:=\min\{k\geq0\;;\;T^k x\in Y\}, \quad x\in X,
  \label{psi}
  \\
  Y_k &:=\{x\in X\;;\;\psi(x)=k\},\quad k\geq0.
  \label{Yk}
\end{align}
In particular, $Y_0 = Y$. Note that 
$\bigcup_{k=0}^{n-1}T^{-k}Y = \sum_{k=0}^{n-1}Y_k$ for $n\geq1$, and $X = \bigcup_{k\geq0}T^{-k}Y = \sum_{k\geq0}Y_k$, $\mu$-a.e.
{\color{black}
Let us denote by $(w(n))_{n\geq0}$ the wandering rate of $Y$, i.e., $w(0):=0$ and
\begin{align}
	w(n):= \mu\Bigg\{\bigcup_{k=0}^{n-1}
               (T^{-k}Y)\Bigg\} 
             = \sum_{k=0}^{n-1}\mu(Y_k),                         \quad n\geq1.
\end{align}
We will denote by $Q(s)$ the Laplace transform of $(w(n+1)-w(n))_{n\geq0}$:
\begin{align}\label{hatw}
    {\color{black}Q}(s) 
           :=\sum_{n\geq 0} e^{-ns} 
                   \Big\{w(n+1)-w(n)\Big\} 
            = \sum_{n\geq 0} e^{-ns}
                \mu(Y_n), 
                \quad s>0.
  \end{align}}\noindent
For each $i=1,\dots,d$, the \textit{wandering rate of $Y\:$starting from $\color{black}A_i$}, which will be denoted by $\color{black}(w_i(n))_{n\geq0}$, is defined by ${\color{black}w_i}(0):=0$ and
  \begin{align}
  \label{wA}
  {\color{black}w_i}(n) :=\mu\Bigg\{\bigcup_{k=0}^{n-1}
               (T^{-k}Y\cap {\color{black}A_i})\Bigg\} 
             = \sum_{k=0}^{n-1}\mu(Y_k \cap {\color{black}A_i}),                         \quad n\geq1,
  \end{align}
and the Laplace transform of $\color{black}(w_i(n+1)-w_i(n))_{n\geq 0}$ will be denoted by ${\color{black}Q_i}(s)$:
  \begin{align}\label{hatwA}
    {\color{black}Q_i}(s) 
           :=\sum_{n\geq 0} e^{-ns} 
                   \Big\{{\color{black}w_i}(n+1)-{\color{black}w_i}(n)\Big\} 
            = \sum_{n\geq 0} e^{-ns}
                \mu(Y_n\cap {\color{black}A_i}), 
                \quad s>0.
  \end{align}
It is easily seen that $0<{\color{black}Q_i}(s)<\infty$ for $s>0$ and $\color{black}Q_i(s) \to \infty$, as $\color{black}s\downarrow0$.

{\color{black}
\begin{Ass}\label{ass:comp} 
For each $i{\color{black}=1,\dots,d}$, the collection 
\begin{align}
	\mathfrak{H}_i:=
	\bigg\{
	\frac{1}{w_i(n)}
     \sum_{k=0}^{n-1}
      \widehat{T}^k 1_{Y_k \cap A_i}
      \;;\;n\geq1
	\bigg\}
\end{align}
is strongly precompact in $L^1(\mu)$.  
\end{Ass}}


Let $f,g:(0,\infty)\to(0,\infty)$ be positive functions. {\color{black} For $c\in [0,\infty]$, we write $f(x)\sim cg(x)$, as $x\to x_0$, if it holds that $\lim_{x\to x_0}f(x)/g(x) =c$.
We note that $cg(x)$ has only a symbolic meaning if $c=0$ or $\infty$. See Bingham--Goldie--Teugels \cite{BGT}, p.$\:$xix.}

A positive and measurable function $f:(0,\infty)\to (0,\infty)$ is called regularly varying of index $\rho\in\bR$ at $\infty$, written $f\in\cR_\rho(\infty)$,  if, for each $r>0$, $f(rx)\sim r^\rho f(x)$, as $x\to\infty$.
A positive and measurable function $g$ is called regularly varying of index $\rho$ at $0$, written $g\in \cR_\rho(0+)$, if, for any each $r>0$, $g(rx)\sim r^\rho g(x)$, as $x\downarrow 0$. A positive sequence $(a_n)_{n\geq 0}$ is called regularly varying of index $\rho$ at $\infty$, written $(a_n)\in\cR_\rho(\infty)$, if the function $f(x):= a_{\lfloor x \rfloor}$ belongs to $\cR_\rho(\infty)$. Here $\lfloor x \rfloor$ means the greatest integer which is less than or equal to $x$. For a basic discussion of regularly varying functions, we refer the reader to Chapter 1 of \cite{BGT}.

We remark that $\sum_{i=1}^d w_i(n)=w(n)-\mu(Y)$ since $\sum_{i=1}^d A_i=\sum_{n\geq1}Y_n$.
Thus, $\sum_{i=1}^d w_i(n)\sim w(n)$, as $n\to\infty$.

\begin{Ass}\label{ass:reg}
For some constants $\alpha\in[0,1]$ and 
$\beta=(\beta_1,\dots,\beta_d) \in [0,1]^d$ with $\sum_{i=1}^d\beta_i=1$,
  \begin{align}
    &\quad\quad\quad\quad
    \big(w(n)\big)_{n\geq0} \in \cR_{1-\alpha}(\infty),
            \label{eq:reg}
   \\ 
    &
    w_i(n)\sim \beta_i w(n), \quad\text{as $n\to\infty$,  $i=1,\dots,d$.} \label{eq:bal}
\end{align}
\end{Ass}

\begin{Rem}\label{rem:reg-lap}
By Karamata's Tauberian theorem (see Proposition 4.2 and Remark 4.1 of \cite{TZ}), 
the set of the two conditions 
(\ref{eq:reg}) and (\ref{eq:bal}) is 
equivalent to that of the following two conditions:
  \begin{align}\label{eq:lap-reg}
   &\quad\quad\quad\quad
   {\color{black}Q} \in \cR_{-(1-\alpha)}(0+),
  \\
      \label{eq:lap-bal}
    &
    {\color{black}Q_i}(s)\sim \beta_i {\color{black}Q}(s), 
     \quad\text{as  $s\downarrow 0$,  $i=1,\dots, d$.}
  \end{align}
\end{Rem}

\subsection{Multidimensional generalized arcsine distributions}

Let $\xi_1,\xi_2$ be two ${\color{black}[0,\infty)}$-valued $\alpha$-stable random variables with {\color{black}Laplace transforms} given by
\begin{align*}
   \bE\big[\exp(-\lambda\xi_i)\big] 
    = \exp(-\beta_i\lambda^\alpha ), 
   \quad \lambda>0,\;i=1,2.	
\end{align*}
for $\alpha,\beta_1,\beta_2 \in(0,1)$ with $\beta_1+\beta_2=1$, and we assume  $\xi_1$ and $\xi_2$ are independent. We then recall that \emph{Lamperti's generalized arcsine distribution} \cite{La} (except degenerate cases) is
 the distribution of $\xi_1/(\xi_1+\xi_2)$ given by 
\begin{align*}
  \bP\bigg[\frac{\xi_1}{\xi_1+\xi_2}\leq y\bigg]
  &= 
  \frac{\sin (\pi \alpha)}{\pi} 
  \int_0^y 
   \frac{\beta_1\beta_2 x^{-(1-\alpha)}(1-x)^{-(1-\alpha)}dx}
        {\beta_1^2(1-x)^{2\alpha}+\beta_2^2 x^{2\alpha}
          +2\beta_1\beta_2 x^\alpha (1-x)^\alpha \cos(\pi \alpha)}
  \\
  &=
  \frac{1}{\pi\alpha}{\rm arccot} 
   \bigg[\frac{\beta_1 (1-y)^\alpha}
              {\beta_2 y^\alpha \sin (\pi \alpha)}
         + {\rm cot} (\pi \alpha)\bigg]
  ,
\quad y\in(0,1].	
\end{align*}
In the special case of $\alpha=\beta_1=\beta_2=1/2$, this distribution is nothing else but the usual arcsine distribution:
\begin{align*}
 \bP\bigg[\frac{\xi_1}{\xi_1+\xi_2}\leq y\bigg]
 =\frac{1}{\pi}\int_0^y \frac{dx}{\sqrt{x(1-x)}}
 =\frac{2}{\pi}{\rm arcsin}\sqrt{y},
 \quad y\in[0,1].
\end{align*}
For more details, see, e.g., \S2--3 of \cite{T02}.
We now recall its multidimensional generalization including degenerate cases. Let $d\geq2$ be a positive integer.

\begin{Def}[multidimensional generalized arcsine distributions \cite{BPY}, \cite{Y17}]\label{def:arcsine}
 For $\alpha\in[0,1]$ and $\beta = (\beta_1,\dots,\beta_d) \in [0,1]^d$ with $\sum_{i=1}^d \beta_i=1$, we write $\zeta_{\alpha,\beta}$ for a $[0,1]^d$-valued random variable whose distribution is characterized as follows:
\begin{enumerate}
\item[(1)] if $0<\alpha<1$, the $\zeta_{\alpha,\beta}$ is equal in distribution to 
 \begin{equation}
      \displaystyle
         \left( \frac{\xi_1}{\sum_{i=1}^d\xi_i}, \dots, 
         \frac{\xi_d}{\sum_{i=1}^d\xi_i}\right),
 \end{equation}
where $\xi_1, \dots, \xi_d$ {\color{black}denote} ${\color{black}[0,\infty)}$-valued independent random variables with the one-sided $\alpha$-stable distributions characterized by
  \begin{align}
   \bE\big[\exp(-\lambda\xi_i)\big] 
    = \exp(-\beta_i\lambda^\alpha), 
   \quad \lambda >0,\;i=1,\dots,d.
  \end{align}
\item[(2)] if $\alpha=1$, the $\zeta_{1,\beta}$ is equal a.s.$\;$to the constant $\beta$.

\item[(3)] if $\alpha=0$, the distribution of $\zeta_{0,\beta}$ is $\sum_{i=1}^d \beta_i \delta_{e^{(i)}}$ with $e^{(i)}=(1_{\{i=j\}})_{j=1}^d \in [0,1]^d$ for $i=1,\dots,d$.
\end{enumerate}
\end{Def}

We call $\zeta_{\alpha,\beta}$ {\em trivial} if $\beta=e^{(i)}$ for some $i$. In this case we have $\zeta_{\alpha,e^{(i)}}=e^{(i)}$, a.s.,$\:$whatever $\alpha$ is.

For $a=(a_1,\dots,a_d)$ and $b=(b_1,\dots,b_d)\in\bR^d$, 
we write $a\cdot b :=\sum_{i=1}^d a_i b_i$ for the inner product of $a$ and $b$.
The distribution of $\zeta_{\alpha,\beta}$ is characterized by the following formula of double Laplace transformation.

\begin{Prop}\label{prop:dob-lap-arc}
Let $\zeta$ be a $\color{black}[0,\infty)^d$-valued random variable. Then $\zeta$ is equal in distribution to $\zeta_{\alpha,\beta}$ if and only if for any $q>0$ and $\lambda=(\lambda_1,\dots,\lambda_d)\in {\color{black}[0,\infty)^d}$, 
\begin{align}\label{eq:dob-lap-arc}
\int_0^\infty du \:e^{-qu}
\bE\big[\exp(-u\lambda\cdot \zeta)\big]
=  \frac{\sum_{i=1}^d  \beta_i(q+\lambda_i)^{-(1-\alpha)}}
          {\sum_{i=1}^d  \beta_i(q+\lambda_i)^\alpha }.
\end{align} 	
\end{Prop}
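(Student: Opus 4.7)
The plan is to verify formula (\ref{eq:dob-lap-arc}) directly for $\zeta = \zeta_{\alpha,\beta}$ in each of the three cases of Definition \ref{def:arcsine}, and then to derive uniqueness from injectivity of the one-dimensional Laplace transform in $q$. The main case is $0<\alpha<1$, while the degenerate cases $\alpha\in\{0,1\}$ are immediate from the explicit form of $\zeta_{\alpha,\beta}$.

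For case (1), I would start from the representation $\zeta_{\alpha,\beta} = (\xi_1/S,\ldots,\xi_d/S)$ with $S = \sum_{i=1}^d \xi_i$ and apply Fubini (justified by non-negativity, using $q>0$) to rewrite the left-hand side of (\ref{eq:dob-lap-arc}) as
\begin{equation*}
\bE\left[\frac{1}{q + \lambda\cdot\zeta_{\alpha,\beta}}\right] = \bE\left[\frac{\sum_{j=1}^d \xi_j}{\sum_{i=1}^d (q+\lambda_i)\xi_i}\right].
\end{equation*}
Writing $1/x = \int_0^\infty e^{-tx}\,dt$, pulling the expectation inside once more, and using independence of the $\xi_i$'s together with the identities $\bE[e^{-s\xi_i}] = e^{-\beta_i s^\alpha}$ and $\bE[\xi_i e^{-s\xi_i}] = \alpha\beta_i s^{\alpha-1} e^{-\beta_i s^\alpha}$, this becomes
\begin{equation*}
\int_0^\infty \alpha\, t^{\alpha-1} \left(\sum_{j=1}^d \beta_j (q+\lambda_j)^{\alpha-1}\right) \exp\!\left(-t^\alpha \sum_{i=1}^d \beta_i (q+\lambda_i)^\alpha\right) dt.
\end{equation*}
The remaining $t$-integral is elementary via the substitution $s = t^\alpha$, yielding the factor $1/\left(\alpha\sum_i \beta_i(q+\lambda_i)^\alpha\right)$, and the two factors combine into exactly the right-hand side of (\ref{eq:dob-lap-arc}).

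For case (2), $\lambda\cdot\zeta_{1,\beta} = \sum_i \beta_i\lambda_i$ a.s., so the LHS evaluates to $1/\sum_i \beta_i(q+\lambda_i)$, matching the RHS at $\alpha=1$. For case (3), $\bE[e^{-u\lambda\cdot\zeta_{0,\beta}}] = \sum_i \beta_i e^{-u\lambda_i}$, so the LHS equals $\sum_i \beta_i/(q+\lambda_i)$, matching the RHS at $\alpha=0$. For the converse direction, suppose $\zeta$ satisfies (\ref{eq:dob-lap-arc}). For each fixed $\lambda \in [0,\infty)^d$, the function $u \mapsto \bE[e^{-u\lambda\cdot\zeta}]$ is bounded and continuous on $[0,\infty)$, and its Laplace transform in $q$ is prescribed by (\ref{eq:dob-lap-arc}); by injectivity of the one-dimensional Laplace transform this function of $u$ is uniquely determined, and setting $u=1$ recovers $\bE[e^{-\lambda\cdot\zeta}]$ for every $\lambda \in [0,\infty)^d$, which characterizes the distribution of the $[0,\infty)^d$-valued random variable $\zeta$. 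Since the forward direction shows that $\zeta_{\alpha,\beta}$ realizes this formula, we conclude $\zeta \stackrel{\mathrm{d}}{=} \zeta_{\alpha,\beta}$. I do not anticipate any real obstacle here; the only minor point requiring care is justifying the Fubini exchanges in case (1), which follows at once from non-negativity of all integrands.
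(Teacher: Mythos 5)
Your computation is correct and complete in all three cases; the paper itself does not include a proof of this proposition but refers the reader to Proposition 3.6 of [Y17], whose argument is precisely this standard double-Laplace-transform reduction $\int_0^\infty e^{-qu}\bE[e^{-u\lambda\cdot\zeta}]\,du = \bE[(q+\lambda\cdot\zeta)^{-1}]$ followed by the identities for $\bE[e^{-s\xi_i}]$ and $\bE[\xi_i e^{-s\xi_i}]$ under independence. Your justification of the Fubini exchanges via non-negativity, and the uniqueness step via injectivity of the Laplace transform in $q$ and then in $\lambda$, are both fine.
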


For the proof, see, e.g., the proof of Proposition 3.6 of Y. Yano \cite{Y17}.


\subsection{Limit theorems in a general setting}

Let $Z$ be a Polish space, $(F_n)$  a sequence of $Z$-valued measurable functions defined on $(X,\cA)$, 
and $\zeta$  a $Z$-valued random variable defined on a probability space $(\Omega, \cF, \bP)$. 
If for a probability measure $\nu$ on $(X,\cA)$, the distribution of $F_n$ under $\nu$ converges to that of $\zeta$ under $\bP$, 
then we write $F_n \overset{\nu}{\Longrightarrow} \zeta$. 
We say that $F_n$ \textit{converges to $\zeta$ strongly in distribution} if for any probability measure $\nu \ll \mu$ on $(X,\cA)$, it holds that $F_n \overset{\nu}{\Longrightarrow} \zeta$.
We denote this convergence by $F_n \overset{\cL(\mu)}{\Longrightarrow} \zeta$.

Set $S_0:={\color{black}(0,\dots,0)}$ and
\begin{align}\label{occupation}
S_n :=\bigg(\sum_{k=1}^n 1_{A_1}\circ T^k, \dots, 
            \sum_{k=1}^n 1_{A_d}\circ T^k \bigg),
            \quad{n\geq1}.
\end{align}
The following theorem for $d=2$ was due to {\color{black}Zweim\"uller (Theorem 2.2 of \cite{Z07})}.

\begin{Thm}[direct limit theorem]\label{thm:main}
Let $T$ be a CEMPT on $(X,\cA,\mu)$ and suppose that Assumptions \ref{ass:ray}, {\color{black}\ref{ass:comp}} and \ref{ass:reg} hold.
Then
 \begin{align*}
  S_n/n
     \overset{\cL(\mu)}{\Longrightarrow}
     \zeta_{\alpha,\beta},
     \quad\text{as $n\to\infty$}.
  \end{align*}
\end{Thm}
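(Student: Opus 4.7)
The plan is to compute the double Laplace transform of the occupation vector $S_n$ under arbitrary absolutely continuous initial distributions, and to match the result with the characterization of $\zeta_{\alpha,\beta}$ provided by Proposition \ref{prop:dob-lap-arc}. By the framework for strong distributional convergence to be developed in Section \ref{sec:dob-lap}, it will suffice to identify, for each density $f\in L^1(\mu)$ with $f\geq 0$ and $\int f\,d\mu = 1$, and each $\lambda\in[0,\infty)^d$, the asymptotic behaviour as $s\downarrow 0$ of
$$\Phi_f(s,\lambda) := \sum_{n\geq 0} e^{-ns} \int_X e^{-\lambda\cdot S_n}\, f\, d\mu,$$
rescaled so that $\lambda$ is of order $s$. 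After Karamata inversion, this will yield joint convergence of $S_n/n$ to $\zeta_{\alpha,\beta}$ in distribution under $\nu = f\,d\mu$, and Section \ref{sec:dob-lap} will then upgrade this to strong distributional convergence in the sense of $\overset{\cL(\mu)}{\Longrightarrow}$.

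The central step is to decompose $\Phi_f(s,\lambda)$ according to the first hitting time $\psi$ of the junction $Y$. By Assumption \ref{ass:ray}, on the event $\{\psi = k\}$ with $k\geq 1$ the orbit $x, Tx, \dots, T^{k-1}x$ lies in a single ray $A_i$, so that $S_k = k\,e^{(i)}$ there and the continuation starts afresh at the point $T^k x \in Y$. Separating the contribution of the initial excursion from that of times $n\geq\psi$, and using the duality identity $\int (\widehat{T}^k g)\cdot h\,d\mu = \int g\cdot (h\circ T^k)\,d\mu$ to push $f$ forward along the initial excursion, one arrives at a schematic identity of the form
$$\Phi_f(s,\lambda) = I_f(s,\lambda) + \sum_{i=1}^d \sum_{k\geq 1} e^{-k(s+\lambda_i)}\, \Phi_{\widehat{T}^k(1_{Y_k\cap A_i}\, f)}(s,\lambda),$$
whose kernels are governed by the Laplace transforms $Q_i(s+\lambda_i)$ of Subsection \ref{subsec:notation}.

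To close this identity I would iterate the decomposition, at which point Assumption \ref{ass:comp} enters: strong $L^1$-precompactness of the families $\mathfrak{H}_i$ permits replacing the normalized averages $w_i(n)^{-1}\sum_{k=0}^{n-1}\widehat{T}^k 1_{Y_k\cap A_i}$ by their $L^1$-limits uniformly in $f$ over weakly compact sets. Combining the asymptotic recursions of Section \ref{sec:integrating} with Remark \ref{rem:reg-lap} (namely $Q\in \cR_{-(1-\alpha)}(0+)$ and $Q_i(s)\sim \beta_i Q(s)$), the leading-order contributions cancel by mass conservation $\sum_i \beta_i = 1$, and the genuine information is carried by the second-leading differences $Q_i(s) - Q_i(s+\lambda_i)$. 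By regular variation of $Q$, these scale under $s\downarrow 0$ with $\lambda = s\tilde\lambda$ to terms proportional to $\beta_i \bigl[1-(1+\tilde\lambda_i)^{-(1-\alpha)}\bigr]$, and when the resulting numerator and denominator are assembled one obtains exactly the right-hand side of \eqref{eq:dob-lap-arc}; Proposition \ref{prop:dob-lap-arc} then identifies the limit as $\zeta_{\alpha,\beta}$.

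The main obstacle is the absence of a genuine renewal structure for excursions from $Y$. In the diffusion settings of \cite{BPY,W95,Y17} successive excursions from a regular point are i.i.d.\ under the excursion measure, making the renewal equation for the double Laplace transform exact; here the successive excursions of $T$ from $Y$ are correlated through the transformation, and no such equation is available. Assumption \ref{ass:comp} is precisely the device that substitutes for independence by an asymptotic quasi-stationarity sufficient for the computation above. The delicate point will be to carry out the replacement of the dual-operator iterates $\widehat{T}^k 1_{Y_k\cap A_i}$ by their stationary $L^1$-limits in such a way that the cancellation of leading terms is genuine, the second-leading terms survive the scaling, and the error terms vanish uniformly for $\lambda$ in compact subsets of $[0,\infty)^d$; this uniformity is what ultimately feeds Proposition \ref{prop:dob-lap-arc} and yields the claimed joint convergence.
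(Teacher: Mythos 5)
Your high-level strategy --- compute the double Laplace transform of $S_n$ under an arbitrary $\nu\ll\mu$, match it against Proposition \ref{prop:dob-lap-arc}, and upgrade to $\overset{\cL(\mu)}{\Longrightarrow}$ via Lemma \ref{lem:main-a} --- is exactly the paper's, and your first-passage decomposition on $\{\psi=k\}\cap A_i$ is a legitimate starting identity. The genuine gap is in how you propose to close it. ``Iterating the decomposition'' produces, after each return to $Y$, densities of the form $\widehat{T}^k(1_{B_{i,k}}\,g)$ with $g$ depending on $f$ and on the whole excursion history; without a renewal structure this hierarchy never closes, and Assumption \ref{ass:comp} cannot repair it in the way you suggest: $\mathfrak{H}_i$ consists of normalized sums of $\widehat{T}^k 1_{Y_k\cap A_i}$ (indicators only, no $f$), and strong precompactness gives no $L^1$-\emph{limits} at all --- the paper never takes such limits, and in general they need not exist. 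The cancellation you attribute to ``mass conservation $\sum_i\beta_i=1$'' acting on the differences $Q_i(s)-Q_i(s+\lambda_i)$ is the standard renewal-theoretic mechanism, i.e.\ precisely the computation that is unavailable here. What actually closes the equation in the paper is stationarity, used once and without iteration: in the proof of Proposition \ref{prop:dob-lap} the excursion decomposition is performed under $\mu$ on $Y\cap T^{-1}A_i$ (not under $\nu$), and $\mu$-invariance gives $\int_{Y\cap T^{-1}Y^c}\widetilde{S}_{n,t}\,d\mu-\int_{Y^c\cap T^{-1}Y}\widetilde{S}_{n,t}\circ T\,d\mu=O(t^{-1})$ uniformly in $n$ (see (\ref{eq:2})), so that after summation the leading terms cancel and one is left with the single identity (\ref{eq:3}) involving $\sum_i Q_i\big((q+\lambda_i)t^{-1}\big)$.

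The second missing ingredient is how the arbitrary initial law enters. In your sketch $f$ is pushed through the first excursion; in the paper $\nu$ never enters the excursion decomposition. Instead, by Lemma \ref{lem:excur} the factor $\sum_{n\geq2}(1-e^{-n(q+\lambda_i)t^{-1}})\widehat{T}^n 1_{B_{i,n}}$ is proportional to an element $u_t$ of $\bar{\rm co}(\mathfrak{H}_i)$, and Zweim\"uller's integrating lemma (Lemma \ref{lem:int}) shows that integrating $\sum_n e^{-nqt^{-1}}\widetilde{S}_{n,t}$ against $u_t\,d\mu$ or against $d\nu$ differs only by $o(t)$, uniformly over the closed convex hull (Corollary \ref{cor:int}). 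The dynamical input making that lemma work is hypothesis (i), the asymptotic $T$-invariance $\sup_{t>0}\big|\widetilde{S}_{n,t}\circ T-\widetilde{S}_{n,t}\big|\to0$ in measure, a consequence of $T$ being a CEMPT, which your proposal never invokes; this, not a replacement of the dual averages by stationary limits ``uniformly in $f$'', is the precise role of Assumption \ref{ass:comp}. Without these two steps (the stationarity identity (\ref{eq:2})--(\ref{eq:3}) and Lemma \ref{lem:int}/Corollary \ref{cor:int}) your plan does not yield Proposition \ref{prop:dob-lap}, and the final limit passage via Remark \ref{rem:reg-lap}, Proposition \ref{prop:dob-lap-arc} and Lemma \ref{lem:main-a} --- which you do have right --- has nothing to feed on.
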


The proof of Theorem \ref{thm:main} will be given in Section \ref{sec:pf}.

Conversely, under Assumptions \ref{ass:ray} {\color{black}and \ref{ass:comp}}, if $S_n/n$ converges in distribution under some probability measure, then Assumption \ref{ass:reg} must hold except certain trivial cases. In particular, the class of possible limit distributions of $S_n/n$ coincides the set of distributions of $\{\zeta_{\alpha,\beta}\}_{\alpha,\beta}$. 

\begin{Thm}[inverse limit theorem] \label{thm:main2}
Let $T$ be a CEMPT on $(X,\cA,\mu)$ and suppose that Assumptions \ref{ass:ray} {\color{black} and \ref{ass:comp}} hold. Furthermore, suppose that there exist a probability measure $\nu_0 \ll \mu$ and a $[0,1]^d$-valued random variable $\zeta$ such that 
\begin{align*}
	S_n/n \overset{\nu_0}{\Longrightarrow} \zeta,
	\quad\text{as $n\to\infty$.}
\end{align*}
Then $\zeta$ is equal in distribution to $\zeta_{\alpha,\beta}$ for some $\alpha\in[0,1]$ and $\beta =(\beta_1,\dots,\beta_d)\in[0,1]^d$ with $\sum_{i=1}^d\beta_i =1$,
and 
\begin{align*}
   S_n/n \overset{\cL(\mu)}{\Longrightarrow} \zeta_{\alpha,\beta},
   \quad\text{as $n\to\infty$}.
\end{align*}
Moreover, if $\zeta_{\alpha,\beta}$ is not trivial, then the two conditions {\rm (\ref{eq:reg})} and {\rm (\ref{eq:bal})} hold.
\end{Thm}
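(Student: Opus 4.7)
My strategy is to invert the double Laplace transform argument underlying Theorem \ref{thm:main}. First, I would upgrade the weak convergence $S_n/n \overset{\nu_0}{\Longrightarrow} \zeta$ into pointwise convergence, as $s\downarrow 0$, of the scaled discrete double Laplace transform
\begin{equation*}
\Phi_s(\nu;q,\lambda) := s\sum_{n\geq 0} e^{-sqn}\int_X \exp(-s\lambda\cdot S_n)\,d\nu, \qquad q>0,\ \lambda\in[0,\infty)^d,
\end{equation*}
towards
\begin{equation*}
\Psi(q,\lambda) := \int_0^\infty e^{-qu}\,\bE[\exp(-u\lambda\cdot\zeta)]\,du,
\end{equation*}
via a standard Abelian-type argument that replaces $sn$ by the continuous variable $u$, using boundedness of the integrand.

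Next I would invoke the asymptotic identity developed in Sections \ref{sec:dob-lap} and \ref{sec:integrating}, which, under Assumptions \ref{ass:ray} and \ref{ass:comp} alone, should yield a leading-order representation of $\Phi_s(\nu;q,\lambda)$ as a rational expression in the Laplace transforms $Q_1(s_1),\ldots,Q_d(s_d)$ evaluated at arguments $s_i=s_i(s,q,\lambda_i)\downarrow 0$; crucially, this representation is $\nu$-independent to leading order, as the strong $L^1(\mu)$-precompactness of the families $\mathfrak{H}_i$ in Assumption \ref{ass:comp} is precisely what absorbs the dependence on the initial distribution. Combining this universality with the convergence $\Phi_s(\nu_0;\cdot)\to\Psi$ forces, via an extraction of subsequential limits of $Q_i(s)/Q(s)\in[0,1]$ coupled with the identity $Q-\mu(Y)=\sum_i Q_i$, the existence of limits
\begin{equation*}
\beta_i:=\lim_{s\downarrow 0}\frac{Q_i(s)}{Q(s)}\in[0,1], \qquad \sum_{i=1}^d \beta_i=1,
\end{equation*}
and
\begin{equation*}
\lim_{s\downarrow 0}\frac{Q(\rho s)}{Q(s)}=\rho^{-(1-\alpha)} \quad \text{for all $\rho>0$, for some $\alpha\in[0,1]$}.
\end{equation*}
A direct comparison with formula (\ref{eq:dob-lap-arc}) then identifies $\zeta \stackrel{d}{=}\zeta_{\alpha,\beta}$.

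In the non-trivial case $\beta\neq e^{(i)}$ for all $i$, these are precisely (\ref{eq:lap-reg}) and (\ref{eq:lap-bal}), so Karamata's Tauberian theorem (Remark \ref{rem:reg-lap}) yields (\ref{eq:reg}) and (\ref{eq:bal}); Assumption \ref{ass:reg} then holds and Theorem \ref{thm:main} upgrades the convergence to $S_n/n \overset{\cL(\mu)}{\Longrightarrow} \zeta_{\alpha,\beta}$. I expect the main obstacle to be the trivial case $\beta=e^{(i)}$: here $\Psi(q,\lambda)=(q+\lambda_i)^{-1}$ is independent of $\alpha$ and insensitive to the precise $s\downarrow 0$ asymptotics of $Q$, so Karamata cannot be inverted. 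One must instead argue directly from the asymptotic recursion and the precompactness of $\mathfrak{H}_i$ that the property $S_n/n\to e^{(i)}$ in distribution under $\nu_0$ propagates to all $\nu\ll\mu$ by an $L^1(\mu)$-density argument, bypassing regular variation altogether.
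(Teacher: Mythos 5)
Your high-level strategy is correct and it is indeed the paper's: invert the asymptotic double Laplace transform formula (Proposition \ref{prop:dob-lap}), exploit its $\nu$-independence, and feed the result through Lemma \ref{lem:main-a} and Theorem \ref{thm:main}. But the central technical step, as you state it, has a real gap, and your proposed fix for the trivial case is different from (and vaguer than) what the paper actually does.

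The gap is in your claim that the convergence $\Phi_s(\nu_0;\cdot)\to\Psi$ together with the universal asymptotic representation ``forces, via extraction of subsequential limits of $Q_i(s)/Q(s)$, the existence of $\beta_i:=\lim_{s\downarrow 0}Q_i(s)/Q(s)$ and $\lim_{s\downarrow 0}Q(\rho s)/Q(s)=\rho^{-(1-\alpha)}$.'' This cannot be argued from subsequential limits of the one-parameter ratios $Q_i(s)/Q(s)$ alone, because the right-hand side of \eqref{eq:dob-Lap} evaluates each $Q_i$ at its own shifted argument $(q+\lambda_i)t^{-1}$; controlling these requires joint control of $Q_i$ at \emph{different} small arguments, not just at one. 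The paper resolves this by introducing the two-argument quantity $W_t(q,r)=Q_1\big((q+r)t^{-1}\big)\big/\sum_{i\geq 2}Q_i(qt^{-1})$, passing the convergence of \eqref{eq:dob-Lap} through a M\"obius transformation to show $W_t(q,r)\to W(q,r)\in[0,\infty]$ exists, and then exploiting the scaling identity $W(pq,r)=W(q,p^{-1}r)$ and monotonicity in $r$ to run a trichotomy on whether $W$ is $\infty$, $0$, or finite-positive. Only in the finite-positive case does one obtain regular variation, and only there via the ratio $Q_1(ps)/Q_1(s)=W_{s^{-1}}(q,r)/W_{s^{-1}}(q,r')$ and Theorem 1.4.1 of \cite{BGT}. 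Without this device, your ``subsequential limit'' step is not a proof.

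Moreover, the assertion that the convergence forces regular variation of $Q$ is simply false in the trivial case $\beta=e^{(i)}$, and you partially retract it only at the end. Your proposed workaround --- ``argue directly from the asymptotic recursion and precompactness by an $L^1(\mu)$-density argument'' --- is not what the paper does, and it is not clear it would go through without essentially rediscovering Proposition \ref{prop:dob-lap}. The paper's handling of the trivial case is cleaner and uses tools already built: since the limit $W(q,r)=\infty$ (or $0$) is purely a statement about the deterministic quantities $Q_i$, it is automatically $\nu$-independent; Proposition \ref{prop:dob-lap} then gives $\lim_t\int_0^\infty e^{-qu}\int_X d\nu\,\widetilde{S}_{re^{(j)},\lfloor ut\rfloor,t}\,du=(q+r)^{-1}$ for all $\nu\ll\mu$ and for $(q,r)$ in an appropriate open region, a slight modification of Lemma \ref{lem:main-a} upgrades this to $n^{-1}\sum_{k\leq n}1_{A_j}\circ T^k\overset{\cL(\mu)}{\Longrightarrow}1$, and Remark \ref{rem:birkoff} (that $\sum_i S_n^{(i)}/n\to 1$ a.e.) finishes. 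No regular variation is needed and none is claimed in that case. You would do well to adopt this route rather than invent a separate density argument.

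Finally, a minor point: once some $W^{(j)}$ is identified as finite-positive everywhere and $Q_j$ shown regularly varying, the paper still has to show that the remaining ratios $Q_i(s)/Q(s)$ converge (possibly to $0$), and that the degenerate case ``$W^{(j)}$ has a zero for every $j$'' is excluded by the a.e.\ convergence $\sum_i S_n^{(i)}/n\to 1$; your sketch omits these bookkeeping steps, which are necessary to conclude $\sum_i\beta_i=1$.
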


We will give the proof of Theorem \ref{thm:main2} in Section \ref{sec:pf2}.

\begin{Cor}\label{cor:excur}
Let $\alpha\in[0,1)$ and $\beta=(\beta_1,\dots,\beta_d) \in [0,1]^d$ with $\sum_{i=1}^d\beta_i=1$. Suppose that $T$ is a CEMPT on $(X,\cA,\mu)$ and Assumptions \ref{ass:ray} {\color{black} and \ref{ass:comp}} hold. We consider the following conditions:
\begin{itemize}
 \item[{\rm (i)}] There exists $\Phi\in\cR_{-\alpha}(\infty)$ such that
\begin{align*}
   \mu[x\in Y\:;\:Tx,\dots,T^n x\in A_i]
    \sim
    \beta_i \Phi(n), 
    \quad\text{as $n\to\infty$},\; i=1,\dots,d. 	
\end{align*}
 \item[{\rm (ii)}] $S_n/n \overset{\cL(\mu)}{\Longrightarrow} \zeta_{\alpha,\beta},
   \;\text{as $n\to\infty$}$.
\end{itemize}
Then, {\rm (i)} implies {\rm (ii)}. Furthermore, if $\zeta_{\alpha,\beta}$ is not trivial, then {\rm (ii)} implies {\rm (i)}.  
\end{Cor}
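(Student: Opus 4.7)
The plan is to use an identity that bridges condition (i) to Assumption \ref{ass:reg}, and then to invoke Theorems \ref{thm:main} and \ref{thm:main2}. The critical observation is that the excursion-tail measure in (i) agrees with an increment of the wandering rate:
\begin{align}
\mu\{x \in Y \;:\; Tx, \dots, T^n x \in A_i\} = \mu(Y_n \cap A_i) = w_i(n+1) - w_i(n), \quad n \geq 1. \label{eq:planbridge}
\end{align}
I would establish (\ref{eq:planbridge}) via Kac's formula for the first-return map to $Y$: writing $\tau_Y(x) := \min\{k \geq 1 \;:\; T^k x \in Y\}$ and integrating $1_{Y_n \cap A_i}$ along orbit pieces $(x, Tx, \dots, T^{\tau_Y(x)-1}x)$ with $x \in Y$, the only index $k$ that can contribute is $k = \tau_Y(x) - n$, and it contributes exactly when $\tau_Y(x) \geq n+1$ and $Tx \in A_i$, since dynamical separation (Assumption \ref{ass:ray}) forces the orbit to remain in $A_i$ until it re-enters $Y$. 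Writing $u_i(n)$ for this common value, the sequence $(u_i(n))_{n\geq1}$ is non-increasing.

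For (i) $\Rightarrow$ (ii), assuming $u_i(n) \sim \beta_i \Phi(n)$ with $\Phi \in \cR_{-\alpha}(\infty)$ and $\alpha \in [0,1)$, Karamata's theorem on partial sums of regularly varying sequences (Chapter 1 of \cite{BGT}) gives $w_i(n+1) = \sum_{k=1}^n u_i(k) \sim \beta_i \, n\Phi(n)/(1-\alpha)$ when $\beta_i > 0$; for $\beta_i = 0$, the estimate $u_i(n) = o(\Phi(n))$ combined with $\sum_{k \leq n} \Phi(k) \to \infty$ yields $w_i(n) = o(n\Phi(n))$ by a Cesaro-type argument. Summing over $i$ (using $\sum_i \beta_i = 1$) produces $w(n) \sim n\Phi(n)/(1-\alpha) \in \cR_{1-\alpha}(\infty)$ and $w_i(n) \sim \beta_i w(n)$, so Assumption \ref{ass:reg} holds; Theorem \ref{thm:main} then delivers (ii).

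For (ii) $\Rightarrow$ (i) under non-triviality of $\zeta_{\alpha,\beta}$, Theorem \ref{thm:main2} supplies Assumption \ref{ass:reg} with the same $\alpha \in [0,1)$ and $\beta$. Set $\Phi(n) := (1-\alpha) w(n)/n \in \cR_{-\alpha}(\infty)$. For each $i$ with $\beta_i > 0$, $w_i \sim \beta_i w$ is regularly varying of positive index $1-\alpha$, so the monotone density theorem (applicable because $w_i$ is non-decreasing and its increments $u_i$ are non-increasing) gives $u_i(n) \sim (1-\alpha) w_i(n)/n \sim \beta_i \Phi(n)$. For $i$ with $\beta_i = 0$, monotonicity of $u_i$ yields $u_i(n) \leq w_i(n+1)/(n+1) = o(w(n)/n) = o(\Phi(n))$, which matches $\beta_i \Phi(n) = 0 \cdot \Phi(n)$ in the symbolic convention from Subsection \ref{subsec:notation}.

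The main obstacle is the bridge identity (\ref{eq:planbridge}): it trades an excursion-theoretic statement for a difference of wandering rates, after which the proof reduces to the standard conversions between a regularly varying sequence and its increments via Karamata's theorem and the monotone density theorem, combined with Theorems \ref{thm:main} and \ref{thm:main2}.
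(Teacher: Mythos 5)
Your proposal is correct and follows essentially the paper's own route: the bridge identity $\mu\{x\in Y\,;\,Tx,\dots,T^nx\in A_i\}=w_i(n+1)-w_i(n)$ is exactly the content of Lemma \ref{lem:excur} as recorded in the Remark following Corollary \ref{cor:excur}, and the remaining steps --- translating (i) into conditions (\ref{eq:reg}) and (\ref{eq:bal}) via Karamata-type arguments for monotone increments and then invoking Theorems \ref{thm:main} and \ref{thm:main2} --- are precisely how the paper obtains the corollary. Your Kac-formula derivation of the identity and your explicit treatment of the $\beta_i=0$ case merely spell out steps the paper compresses into the citation of Lemma \ref{lem:excur} (from (6.5) of \cite{TZ}) and of Karamata's Tauberian theorem.
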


\begin{Rem}
As we shall see in Lemma \ref{lem:excur},
 \begin{align*}
  w_i(n+1)-w_i(n)=\mu[x\in Y\:;\:Tx,\dots,T^n x\in A_i],
  \quad n\geq1,\;i=1,\dots,d.	
 \end{align*}
Hence, by Karamata's Tauberian theorem, if $\alpha \in [0,1)$ and $\beta=(\beta_1,\dots,\beta_d) \in [0,1]^d$ with $\sum_{i=1}^d \beta_i=1$, then the condition (i) of Corollary \ref{cor:excur} is equivalent to the set of the two conditions (\ref{eq:reg}) and (\ref{eq:bal}). 
\end{Rem}

\begin{Rem}\label{rem:birkoff}
Because of $\mu(X)=\infty$, for any $B\in\cA$ with $\mu(B)<\infty$, 
\begin{align*}
n^{-1}\sum_{k=1}^n 1_{B}\circ T^k \to 0,
\quad\text{$\mu$-a.e., as $n\to\infty$}. 
\end{align*}
This convergence is easily deduced from Hopf's ratio ergodic theorem. See, e.g., Theorem 2.2.5 and Exercise 2.2.1 of \cite{A97}. Hence Theorems \ref{thm:main} and \ref{thm:main2} and Corollary \ref{cor:excur} remain valid if we replace $S_n$ with 
\begin{align*}
S'_n:=\bigg(\sum_{k=1}^n 1_{A'_1}\circ T^k, \dots, 
            \sum_{k=1}^n 1_{A'_d}\circ T^k \bigg),
\end{align*}
where $A'_i\in\cA$ satisfies $\mu(A_i \triangle A'_i)<\infty$ for $i=1,\dots,d$. 
\end{Rem}

\subsection{Application to interval maps with indifferent fixed points}
\label{subsec:interval}

Let $d\geq2$ be a positive integer, and let $0=a_0=x_1<a_1<x_2<\dots<a_{d-1}<x_d=a_d=1$. Suppose that the map $T:[0,1]\to[0,1]$ satisfies the following conditions: for each $i=1,\dots,d$,
\begin{enumerate}
 \item[(1)]
 the restriction $T|_{(a_{i-1},a_i)}$ has a $C^2$-extension over $[a_{i-1},a_i]$, and $\bar{T\big((a_{i-1},a_i)\big)}=[0,1]$,
 \item[(2)]
 the $x_i$ satisfies
 \begin{align*}
 	&\quad\quad\quad\;\;
 	Tx_i=x_i, \quad T'x_i=1, \quad\text{and}
 	\\
 	&(x-x_i)T''x>0 \;\;\text{for any $x\in (a_{i-1},a_i)\setminus\{x_i\}$}.
 \end{align*}
 In particular, $T'>1$ on $(a_{i-1},a_i)\setminus\{x_i\}$ and consequently $x_i$ is an indifferent fixed point of $T$.
\end{enumerate}
Then $T$ has the unique (up to multiplication of positive constants) $\sigma$-finite invariant measure $\mu$ equivalent to the Lebesgue measure and $T$ is a CEMPT on $\big([0,1],\cB([0,1]),\mu\big)$ (see Section 1 of \cite{T83}). By the assumptions (1) and (2), we see that each $T|_{(a_{i-1},a_i)}$ is invertible, and its inverse has a $C^2$-extension over $[0,1]$, which will be denoted by $f_i$. We also see that 
\begin{align*}
  |x-f_i (x)|=O(|x-x_i|^2), \quad\text{as $x\to x_i$}.	
\end{align*}
We know from Lemma 4 of \cite{T83} that the density of $\mu$ w.r.t.$\:$the Lebesgue measure has a version $h$ of the form  
\begin{align}
	h(x)=h_0(x)\prod_{i=1}^d
	     \frac{x-x_i}{x-f_i (x)},
	\end{align}
where $h_0$ is continuous and positive on $[0,1]$. Thus any neighborhood of $x_i$ has infinite volume w.r.t.$\:\mu$.

We can take $\varepsilon>0$ sufficiently small so that the sets
\begin{align*}
   A_i^+&:= (x_i, x_i+\varepsilon),
   \quad i=1,\dots,d-1,
   \\
   A_i^-&:= (x_i-\varepsilon,x_i),
   \quad i=2,\dots,d,
\end{align*}
satisfy $T(A_i^\pm)\subset (a_{i-1},a_i)$.
Thus the $A_i^\pm$'s are disjoint and $\mu(A_i^\pm)=\infty$. Set for $n\geq1$
\begin{align*}
  &S_n^{(i,\pm)} :=\sum_{k=1}^n 1_{A_i^\pm}\circ T^k,
  \\
  &S_n := \big( S_n^{(i,\pm)}\big)_{i,\pm} =\big(S_n^{(1,+)},S_n^{(2,-)},S_n^{(2,+)},\dots,S_n^{(d,-)}\big).
\end{align*}
The following corollaries are multiray extensions of the 2-ray results \cite{T02}. Recall that, if $\lim_{x\to x_0} f(x)/g(x)=c\in[0,\infty]$, we write $f(x)\sim cg(x)$, as $x\to x_0$.

\begin{Cor}[direct limit theorem]\label{cor:indiff-dir}
Let $\alpha\in(0,1)$. Suppose that there exist $\Psi\in\cR_{1+1/\alpha}(0+)$ and  $c=(c_i^\pm)_{i,\pm}\in(0,\infty]^{2d-2}\setminus\{\infty\}^{2d-2}$, such that, for each $i$ and $\pm$,
 \begin{align}\label{reg:T}
 	|Tx-x|\sim c_i^\pm \Psi\big(|x-x_i|\big),
 	\quad\text{as $x\to x_i\pm0 $}.
 \end{align}
Then
\begin{align*}
S_n/n \overset{\cL(\mu)}{\Longrightarrow} \zeta_{\alpha,\beta},
\end{align*}
for the constant $\beta=(\beta_i^\pm)_{i,\pm}\in[0,1]^{2d-2}$ with $\sum_{i,\pm}\beta_i^\pm=1$, defined by
\begin{align}\label{betaipm}
 \beta_i^\pm :=\frac{(c_i^\pm)^{-\alpha} v_i} {\sum_{j,\pm}(c_j^\pm)^{-\alpha} v_j},
 \quad\text{with }
 v_i:= \sum_{j\neq i} (h\circ f_j) (x_i) f'_j(x_i).
\end{align}
\end{Cor}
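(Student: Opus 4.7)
The plan is to apply Theorem \ref{thm:main} with the $2d-2$ rays $\{A_i^\pm\}_{i,\pm}$ and junction $Y := [0,1] \setminus \sum_{i,\pm} A_i^\pm$. Assumption \ref{ass:ray} follows quickly from $(x-x_i)T''x > 0$: on $A_i^+$ one has $Tx > x > x_i$ and symmetrically $Tx < x < x_i$ on $A_i^-$, so $T(A_i^\pm)$ stays in the monotone piece $(a_{i-1},a_i)$; shrinking $\varepsilon$ so that each $A_j^\pm$ with $j\neq i$ is disjoint from $(a_{i-1},a_i)$ ensures that $T(A_i^\pm)$ avoids every other ray, hence $Y$ dynamically separates $\{A_i^\pm\}$.

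The main work lies in Assumption \ref{ass:reg}. By Lemma \ref{lem:excur},
\begin{align*}
  w_i^\pm(n+1) - w_i^\pm(n) = \mu(Y \cap T^{-1} B_n^\pm),
  \quad B_n^\pm := \{y \in A_i^\pm : T^k y \in A_i^\pm,\; 0 \le k \le n-1\}.
\end{align*}
Since $f_i(B_n^\pm) \subset A_i^\pm$ lies outside $Y$, while $f_j(B_n^\pm) \subset Y$ for $j\neq i$ once $\varepsilon$ is small (because $f_j(x_i) \notin \{x_1,\dots,x_d\}$), decomposing by inverse branches and using continuity of $h \circ f_j$ and $f_j'$ at $x_i$ gives
\begin{align*}
  \mu(Y \cap T^{-1} B_n^\pm) = \sum_{j \neq i} \int_{B_n^\pm} h(f_j(y)) f_j'(y) \, dy \sim v_i \cdot |B_n^\pm|,
  \quad n\to\infty.
\end{align*}
To compute $|B_n^\pm|$, I would compare the iteration to the ODE $\dot u = c_i^\pm \Psi(u)$: setting $G(u) := \int_u^\varepsilon \Psi(v)^{-1} dv$, an Euler-type comparison shows that $y \in B_n^\pm$ iff $|y-x_i|$ does not exceed $G^{-1}(c_i^\pm(n-1))$, so
\begin{align*}
  |B_n^\pm| \sim (c_i^\pm)^{-\alpha}\, \ell(n), \quad \ell(n) := G^{-1}(n),
\end{align*}
with the same $\ell \in \cR_{-\alpha}(\infty)$ for every ray (obtained by inverting $G$, noting $\Psi \in \cR_{1+1/\alpha}(0+)$ forces $G \in \cR_{-1/\alpha}(0+)$). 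Hence $w_i^\pm(n+1)-w_i^\pm(n) \sim v_i (c_i^\pm)^{-\alpha}\ell(n)$, and Karamata's theorem yields $w_i^\pm, w \in \cR_{1-\alpha}(\infty)$ together with $w_i^\pm(n)/w(n) \to \beta_i^\pm$ as in (\ref{betaipm}).

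For Assumption \ref{ass:comp}, the $L^1(\mu)$-precompactness of $\mathfrak{H}_i$ should follow from Thaler's uniform dual ergodic theorem for interval maps with indifferent fixed points, which gives uniform Ces\`aro convergence of $\widehat{T}^k$ on compacta of $[0,1]\setminus\{x_1,\dots,x_d\}$; combined with the explicit form of $h$, this provides the equi-integrability needed for $L^1$-precompactness. With the three Assumptions verified, Theorem \ref{thm:main} immediately delivers $S_n/n \overset{\cL(\mu)}{\Longrightarrow} \zeta_{\alpha,\beta}$. The principal obstacle I anticipate is the escape-time asymptotics for $|B_n^\pm|$: one must handle the slowly varying correction to $\Psi$ uniformly in the ray so that a \emph{single} $\ell$ appears for every $(i,\pm)$, which is what produces the clean prefactor $(c_i^\pm)^{-\alpha}$ in the definition of $\beta_i^\pm$.
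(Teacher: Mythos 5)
Your overall route coincides with the paper's: take the $2d-2$ rays $A_i^\pm$ and junction $Y$, verify Assumptions \ref{ass:ray}, \ref{ass:comp}, \ref{ass:reg}, and invoke Theorem \ref{thm:main} (the paper passes through Corollary \ref{cor:excur} and Proposition \ref{prop:equivalence-reg}, which is the same thing). Your treatment of Assumption \ref{ass:reg} is essentially the paper's: the branch decomposition $\mu(Y\cap T^{-1}B_n^\pm)=\sum_{j\neq i}\int_{B_n^\pm}(h\circ f_j)f_j'\,dy\sim v_i|B_n^\pm|$ is Proposition \ref{prop:interval-map}(2) (quoted from the proof of Theorem 8.1 of \cite{TZ}), and your ODE/escape-time comparison is Lemma 2 of \cite{T83}: with $U_i^\pm$ as in the paper one gets $f_i^n(1)-x_i\sim (U_i^+)^{-1}(n)\sim G^{-1}(c_i^+n)\sim (c_i^+)^{-\alpha}\ell(n)$, exactly your claim. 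Two small points there: the ``$y\in B_n^\pm$ iff $|y-x_i|\le G^{-1}(c_i^\pm(n-1))$'' threshold is only asymptotic, and for rays with $c_i^\pm=\infty$ you only obtain $w_i^\pm(n)=o(w(n))$, which is precisely what (\ref{eq:bal}) asks for since at least one $c_j^\pm$ is finite; so that part closes.

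The genuine gap is your verification of Assumption \ref{ass:comp}. Strong precompactness of $\mathfrak{H}_i$ in $L^1(\mu)$ is a norm-compactness requirement; equi-integrability gives only weak sequential compactness (Dunford--Pettis), so ``equi-integrability needed for $L^1$-precompactness'' is not a valid reduction, and it is exactly strong compactness that Lemma \ref{lem:int}(iii) needs. Moreover, the uniform dual ergodic theorem controls $a_n^{-1}\sum_{k<n}\widehat{T}^k u$ for a \emph{fixed} $u$, whereas the elements of $\mathfrak{H}_i$ are $w_i^\pm(n)^{-1}\sum_{k<n}\widehat{T}^k 1_{Y_k\cap A_i^\pm}$ with the $k$-dependent first-passage sets $Y_k\cap A_i^\pm$; these are Dynkin--Lamperti-type sums, not Ces\`aro sums of one function, so your cited tool does not apply directly. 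What is true, and what the paper uses (Proposition \ref{prop:interval-map}(1), extracted from the proof of Theorem 8.1 of \cite{TZ}), is that these normalized sums converge in $L^\infty(\mu|_Y)$ to a bounded continuous limit $H_i^\pm$; since each $\widehat{T}^k 1_{Y_k\cap A_i^\pm}$ is supported on $Y$ and $\mu(Y)<\infty$, uniform convergence on $Y$ immediately yields strong $L^1$-precompactness. Replace your compactness step by this statement (or reproduce the Thaler--Zweim\"uller argument for it); with that repair the rest of your proof goes through and agrees with the paper's.
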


\begin{Cor}[inverse limit theorem]\label{cor:indiff-inv}
Suppose that there exist a probability measure $\nu \ll \mu$ and a $[0,1]^{2d-2}$-valued random variable $\zeta$ such that
\begin{align*} 
S_n/n \overset{\nu}{\Longrightarrow} \zeta.
\end{align*}
Then $\zeta$ is equal in distribution to $\zeta_{\alpha,\beta}$ for some $\alpha\in[0,1]$ and $\beta =(\beta_i^\pm)_{i,\pm}\in[0,1]^{2d-2}$ with $\sum_{i,\pm}\beta_i^\pm =1$,
and 
\begin{align*}
S_n/n \overset{\cL(\mu)}{\Longrightarrow} \zeta_{\alpha,\beta}.
\end{align*}
Forthermore, if $\alpha\in(0,1)$ and $\beta\in[0,1)^{2d-2}$, then there exist $\Psi\in\cR_{1+1/\alpha}(0+)$ and $c=(c_i^\pm)_{i,\pm}\in(0,\infty]^{2d-2}$, such that, at least two of the $c_i^\pm$'s are finite, and the asymptotic  behaviors $(\ref{reg:T})$ hold for all $i$ and $\pm$.
\end{Cor}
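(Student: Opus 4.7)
My plan is to deduce Corollary \ref{cor:indiff-inv} from the general inverse limit theorem (Theorem \ref{thm:main2}), and then to reconstruct the pointwise regularity of $T$ at the indifferent fixed points from the regular variation of the wandering rates that Theorem \ref{thm:main2} produces.

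First I verify Assumptions \ref{ass:ray} and \ref{ass:comp} in the present setting; this is the same check needed to derive Corollary \ref{cor:indiff-dir} from Theorem \ref{thm:main}, so I would re-use it. With $Y := [0,1] \setminus \sum_{i,\pm} A_i^\pm$, the explicit formula from \cite{T83} for the density $h$ gives $\mu(Y) < \infty$, while $\mu(A_i^\pm) = \infty$ because $h$ blows up at $x_i$. Dynamical separation holds once $\varepsilon$ is small enough: since $T'(x_i) = 1$ and the prescribed sign of $T''$ forces $T$ to push orbits slowly away from $x_i$, one has $T(A_i^\pm) \subset A_i^\pm \cup Y$, so the orbit cannot jump directly between two distinct rays. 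Assumption \ref{ass:comp} (strong $L^1$-precompactness of the normalized sums $\widehat{T}^k 1_{Y_k \cap A_i^\pm}$) is a standard Thaler-type statement for this class of maps, from \cite{T80} and \cite{T83}. With both assumptions in hand, the first half of the corollary, namely $\zeta \law \zeta_{\alpha,\beta}$ and $S_n/n \overset{\cL(\mu)}{\Longrightarrow} \zeta_{\alpha,\beta}$, is immediate from Theorem \ref{thm:main2}.

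For the second half, assume $\alpha \in (0,1)$ and $\beta \in [0,1)^{2d-2}$, so that $\zeta_{\alpha,\beta}$ is nontrivial. The last clause of Theorem \ref{thm:main2} then yields Assumption \ref{ass:reg}: $w(n) \in \cR_{1-\alpha}(\infty)$ and $w_i^\pm(n) \sim \beta_i^\pm w(n)$. By Lemma \ref{lem:excur} and Karamata's Tauberian theorem this is equivalent to $\mu(Y_n \cap A_i^\pm)$ being regularly varying of index $-\alpha$ in $n$ with prefactor $\beta_i^\pm$ times a common slowly varying function. The remaining task is to back-translate this information about the integer-indexed sequence $\mu(Y_n \cap A_i^\pm)$ into pointwise asymptotics for the continuous function $|Tx - x|$ near each $x_i$. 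The bridge is the classical one: near $x_i$ on the $+$ side, a point $x \in A_i^+$ belongs to $Y_n \cap A_i^+$ precisely when the orbit leaves $A_i^+$ for the first time at step $n$, and this escape time is controlled by integrating $du / |Tu - u|$. If $|Tx-x| \sim c_i^+ \Psi(|x-x_i|)$ with $\Psi \in \cR_{1+1/\alpha}(0+)$, Thaler's forward computation in \cite{T80} gives $\mu(Y_n \cap A_i^+) \sim (c_i^+)^{-\alpha} v_i L(n) n^{-\alpha}$ for some slowly varying $L$ (with $v_i$ as in (\ref{betaipm})), recovering exactly the coefficient of Corollary \ref{cor:indiff-dir}. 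Inverting this correspondence by Karamata's theorem on asymptotic inverses \cite{BGT} forces $|Tx-x|$ to be regularly varying of index $1+1/\alpha$ at $x_i^+$, with constant $c_i^+$ finite if and only if $\beta_i^+ > 0$. The same analysis at every fixed point and either sign gives all $c_i^\pm$; the nondegeneracy hypothesis $\beta \in [0,1)^{2d-2}$ then forces at least two positive $\beta_i^\pm$, hence at least two finite $c_i^\pm$.

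The main obstacle is this last asymptotic inversion: reversing Thaler's forward computations and showing, uniformly in $(i,\pm)$, that the integer-indexed regular variation of $\mu(Y_n \cap A_i^\pm)$ determines regular variation of the continuous function $|Tx-x|$ near $x_i$ with the claimed constant $c_i^\pm$. I would handle it by a careful combination of the classical analysis of escape times near an indifferent fixed point (as in \cite{T80}, \cite{T83}) and the Karamata inversion theory of \cite{BGT}.
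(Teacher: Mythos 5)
Your proposal is correct and follows essentially the same route as the paper: verify Assumptions \ref{ass:ray} and \ref{ass:comp} via the Thaler-type facts in Proposition \ref{prop:interval-map}, obtain the identification $\zeta \law \zeta_{\alpha,\beta}$ and the regular variation of the wandering rates from Theorem \ref{thm:main2} (together with Corollary \ref{cor:excur} and Lemma \ref{lem:excur}), and then invert the escape-time asymptotics near each $x_i$ using the integrals $U_i^{\pm}$ and Karamata's theory of asymptotic inversion. The paper packages this last translation step as the equivalence in Proposition \ref{prop:equivalence-reg}, which is precisely the inversion you sketch, so there is no substantive difference.
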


Corollaries \ref{cor:indiff-dir} and \ref{cor:indiff-inv} can be deduced from Theorems \ref{thm:main} and \ref{thm:main2} and several facts in the earlier studies, e.g., \cite{T83}, \cite{T02} and \cite{TZ}. For the convenience of the reader, we give the proof.

Set
\begin{align*}
  Y&:=[0,1]\setminus \sum_{i,\pm}A_i^\pm.
\end{align*}
Then $\mu(Y)\in(0,\infty)$ and $Y$ dynamically separates the $A_i^\pm$'s.

\begin{Prop}\label{prop:interval-map}
{\rm (1)} For each $i$ and $\pm$, there exists {\color{black} bounded continuous function $H_i^\pm: Y\to [0,\infty)$ such that 
\begin{align}
\frac{1}{w_i^{\pm}(n)}
     \sum_{k=0}^{n-1}
      \widehat{T}^k 1_{Y_k \cap A_i^\pm }
        \to H_i^\pm, \quad\text{in $L^\infty(\mu|_Y)$, 
       as $n\to\infty$,}	
\end{align}
where $(w_i^{\pm}(n))_{n\geq0}$ denotes the wandering rate of $Y$ starting from $A_i^\pm$, and $Y_k$ is given by {\rm (\ref{Yk})}.}

{\rm (2)} For each $i$, as $n\to\infty$,
 \begin{align}
	\mu[x\in Y\:;\:Tx,\dots,T^n x\in A_i^+]
     &\sim
	v_i\Big(f_i^{n}(1)-x_i\Big),
	\\
	\mu[x\in Y\:;\:Tx,\dots,T^n x\in A_i^-]
     &\sim
	v_i\Big(x_i-f_i^{n}(0)\Big),
 \end{align}
for the constant $v_i$ defined by {\rm (\ref{betaipm})}.
\end{Prop}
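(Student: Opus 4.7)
The strategy is to identify the sets $Y_k \cap A_i^{\pm}$ as iterated inverse-branch images of a single set, derive an exact formula for $\widehat{T}^k 1_{Y_k \cap A_i^{\pm}}$ on $Y$, and then invoke Thaler's \cite{T83} uniform asymptotics for iterates near the indifferent fixed points. I treat the ``$+$'' case; the ``$-$'' case is symmetric. Since $A_i^+$ lies in the $i$-th monotonicity interval $(a_{i-1}, a_i)$ and $T$ repels on the right of $x_i$, only the inverse branch $f_i$ can keep an orbit inside $A_i^+$. Working backwards from the first return time to $Y$, one obtains for each $k \geq 1$ the identity $Y_k \cap A_i^+ = f_i^k(E^{i,+})$, where $E^{i,+} := Y \cap (x_i, T(x_i+\varepsilon))$ is independent of $k$ (since $f_i^m(y) \in A_i^+$ for $m=1,\dots,k$ reduces, by monotone contraction of $f_i$ towards $x_i$, to $f_i(y) < x_i + \varepsilon$). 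The Perron--Frobenius formula applied along this single chain then gives, for $u \in Y$,
\begin{align*}
\widehat{T}^k 1_{Y_k \cap A_i^+}(u) = \frac{h(f_i^k(u))(f_i^k)'(u)}{h(u)}\, 1_{E^{i,+}}(u).
\end{align*}

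For part (1) it thus suffices to show that $\frac{1}{w_i^+(n)} \sum_{k=1}^{n-1} h(f_i^k(u))(f_i^k)'(u)$ converges uniformly on $\overline{E^{i,+}}$. The key input, from Thaler \cite{T83} (see also the treatment in \cite{T02} and \cite{TZ}), is a uniform asymptotic of the form $h(f_i^k(u))(f_i^k)'(u) = \rho_k\, G_i(u)(1+o(1))$ on $\overline{E^{i,+}}$, for a positive sequence $(\rho_k)$ and a bounded continuous $G_i$. Preservation of uniform convergence under Ces\`aro-type averaging yields $\sum_{k=1}^{n-1} h(f_i^k(u))(f_i^k)'(u) \sim \big(\sum_{k=1}^{n-1}\rho_k\big) G_i(u)$ uniformly, and since by Fubini $w_i^+(n) = \int_{E^{i,+}} \sum_{k=1}^{n-1} h(f_i^k(v))(f_i^k)'(v)\,dv$, the normalization absorbs the $\rho_k$ factor and produces
\begin{align*}
H_i^+(u) = \frac{1_{E^{i,+}}(u)}{h(u)} \cdot \frac{G_i(u)}{\int_{E^{i,+}} G_i(v)\,dv},
\end{align*}
which is bounded and continuous on $Y$.

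Part (2) is a direct change-of-variables computation. The set $\{x \in Y : Tx, \dots, T^n x \in A_i^+\}$ equals $\bigcup_{j \neq i} f_j(J_n)$, where $J_n = (x_i, f_i^{n-1}(x_i+\varepsilon))$ is the set of points in $A_i^+$ whose first $n-1$ iterates remain in $A_i^+$. For $j \neq i$, the point $f_j(x_i)$ is bounded away from every indifferent fixed point, so $(h \circ f_j)\cdot f_j'$ is continuous at $x_i$ and
\begin{align*}
\mu(f_j(J_n)) = \int_{J_n} h(f_j(u))\, f_j'(u)\, du \sim h(f_j(x_i))\, f_j'(x_i)\cdot |J_n|.
\end{align*}
Finally $|J_n| = f_i^{n-1}(x_i+\varepsilon) - x_i \sim f_i^n(1) - x_i$ because, by Thaler's estimates, $f_i^n(y) - x_i$ is regularly varying in $n$ with a rate independent of the starting point $y \in (x_i, 1]$. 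Summing over $j \neq i$ yields the claimed asymptotic with $v_i = \sum_{j \neq i}(h \circ f_j)(x_i)\, f_j'(x_i)$.

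The main obstacle is the uniform convergence in part (1) on all of $\overline{E^{i,+}}$: the boundary point $x_i + \varepsilon$ of $E^{i,+}$ is adjacent to the indifferent fixed point $x_i$, so the Thaler-type estimates for $h(f_i^k(u))(f_i^k)'(u)$ must be sharp enough to hold uniformly down to this endpoint. This is the one place where the regular-variation hypothesis (\ref{reg:T}) on $Tx - x$ at $x_i$ plays an essential role; the remaining pieces are elementary once this input is available.
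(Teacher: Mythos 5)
Your overall route is the one the paper itself points to (it omits the proof of Proposition \ref{prop:interval-map}, referring to the proof of Theorem 8.1 of \cite{TZ}), and the structural steps are sound: the identification $Y_k\cap A_i^{+}=f_i^k(E^{i,+})$ with $E^{i,+}=[x_i+\varepsilon,\,T(x_i+\varepsilon))$, the exact transfer-operator formula on $Y$, the Ces\`aro argument for part (1), and the change of variables giving $\mu(f_j(J_n))\sim (h\circ f_j)(x_i)f_j'(x_i)\,|J_n|$ in part (2) are all correct (in (2) you should intersect with $Y$ and note that, $\varepsilon$ being small, $f_j(x_i)$ lies in the interior of $Y$ for $j\neq i$, so $f_j(J_n)\subset Y$ for large $n$; this is what puts every $j\neq i$ into $v_i$).

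The one genuine defect is where you locate the hypotheses. Proposition \ref{prop:interval-map} is stated, and used, \emph{without} the regular-variation assumption (\ref{reg:T}): it is what verifies Assumption \ref{ass:comp} also for the inverse Corollary \ref{cor:indiff-inv}, where no regular variation is assumed (regular variation appears only in that corollary's conclusion, via Proposition \ref{prop:equivalence-reg}). So your closing claim that (\ref{reg:T}) ``plays an essential role'' in the uniform estimate of part (1), and your justification of $f_i^{n-1}(x_i+\varepsilon)-x_i\sim f_i^{n}(1)-x_i$ in part (2) by regular variation of $n\mapsto f_i^n(y)-x_i$, prove a statement weaker than the one the paper needs. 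Fortunately both inputs hold without (\ref{reg:T}), using only the $C^2$ branches and convexity at the indifferent fixed points, which is the generality of Thaler's lemmas (\cite{T83}; see also \cite{T02}, \cite{TZ}). For instance, the starting-point independence is elementary: with $a(x):=x-f_i(x)=O((x-x_i)^2)$ one gets $(f_i^{n+1}(y)-x_i)/(f_i^{n}(y)-x_i)=1-a(f_i^{n}y)/(f_i^{n}y-x_i)\to 1$, and choosing a fixed $m$ with $f_i^{m}(1)\le x_i+\varepsilon$, monotonicity sandwiches $f_i^{n+m}(1)\le f_i^{n}(x_i+\varepsilon)\le f_i^{n}(1)$, whence the ratio tends to $1$; likewise the uniform asymptotics $h(f_i^k u)(f_i^k)'(u)=\rho_k G_i(u)(1+o(1))$ on $\overline{E^{i,+}}$ require no regular variation. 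Relatedly, your ``main obstacle'' is not one: $\overline{E^{i,+}}=[x_i+\varepsilon,\,T(x_i+\varepsilon)]$ is compact and bounded away from every $x_j$, so only the compact-set uniformity that Thaler provides is needed. A last cosmetic point, inherited from the statement: your $H_i^{+}$ is continuous on $Y$ only off the interior point $T(x_i+\varepsilon)$, where it drops to $0$; this is harmless, since verifying Assumption \ref{ass:comp} uses only boundedness, support in $Y$, and the resulting $L^1(\mu)$-convergence.
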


Proposition \ref{prop:interval-map} can be easily obtained from the proof of Theorem 8.1 of \cite{TZ}. So we omit its proof.  
By Proposition \ref{prop:interval-map}, the condition of Assumption {\color{black}\ref{ass:comp}} is satisfied.
Set 
\begin{align}
 U_i^+(x)&:=\int_{x_i+x}^{1}\frac{dy}{y-f_i(y)}, \quad x\in(0,1-x_i],
 \\
 U_i^-(x)&:=\int_0^{x_i-x} \frac{dy}{f_i(y)-y}, \quad x\in(0,x_i].
\end{align}
By Lemma 2 of \cite{T83}, we have
\begin{align*}
 U_i^+\big(f_i^{n}(1)-x_i\big)\sim n,
 \quad\text{as $n\to\infty$}.
\end{align*}
Since $U_i^+$ and $\big(f_i^n(1)-x_i\big)_{n\geq0}$ are strictly decreasing, we obtain
\begin{align}
	f_i^{n}(1)-x_i\sim (U_i^+)^{-1}(n), \quad\text{ as $n\to\infty$},
\end{align}
where $(U_i^+)^{-1}$ denotes the inverse function of $U_i^+$. Similarly, 
\begin{align}
	x_i-f_i^n(0)\sim (U_i^-)^{-1}(n), \quad\text{ as $n\to\infty$}.
\end{align} 
Let $\Psi$ and the $c_i^\pm$ be as in the assumption of Corollary \ref{cor:indiff-dir}. We remark that, for each $i$ and $\pm$, the asymptotic behavior (\ref{reg:T}) is equivalent to
\begin{align}
	|x-f_i(x)|\sim c_i^\pm \Psi\big(|x-x_i|\big),
 	\quad\text{as $x\to x_i\pm0 $}.
\end{align}
By the above arguments and by some basic facts of regular variation, 
we has obtained the following proposition:

\begin{Prop}\label{prop:equivalence-reg}
Let $\alpha\in(0,1)$, $c=(c_i^\pm)_{i,\pm}\in(0,\infty]^{2d-2}\setminus\{\infty\}^{2d-2}$, and let $\beta=(\beta_i^\pm)_{i,\pm}\in [0,1]^{2d-2}$ be defined by $(\ref{betaipm})$. Then the following are equivalent:
\begin{enumerate}
\item[\rm (i)] There exists $\Psi\in\cR_{1+1/\alpha}(0+)$ such that $(\ref{reg:T})$ hold for all $i$ and $\pm$.

\item[\rm (ii)] There exists $\Phi\in\cR_{-\alpha}(\infty)$ such that, for each $i$ and $\pm$,
\begin{align*}
	\mu[x\in Y\:;\:Tx,\dots,T^n x\in A_i^\pm]
    \sim
    \beta_i^\pm \Phi(n), 
    \quad\text{as $n\to\infty$}.
\end{align*}
\end{enumerate}
\end{Prop}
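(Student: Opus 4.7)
The plan is to assemble a chain of asymptotic equivalences that is essentially already laid out in the paragraphs preceding the proposition. By Proposition~\ref{prop:interval-map}(2) combined with the relations $f_i^{n}(1)-x_i\sim (U_i^+)^{-1}(n)$ and $x_i-f_i^{n}(0)\sim (U_i^-)^{-1}(n)$, we have
\begin{align*}
\mu[x\in Y\;;\;Tx,\dots,T^n x\in A_i^\pm]\sim v_i\,(U_i^\pm)^{-1}(n),\quad\text{as $n\to\infty$,}
\end{align*}
so the proposition reduces to translating between the behavior of $|y-f_i(y)|$ near $x_i$ (condition~{\rm (i)}) and the behavior of $(U_i^\pm)^{-1}(n)$ at infinity (which, after the above reduction, is condition~{\rm (ii)}).

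For the direction {\rm (i)}$\Rightarrow${\rm (ii)}, assume $|y-f_i(y)|\sim c_i^\pm\Psi(|y-x_i|)$ with $\Psi\in\cR_{1+1/\alpha}(0+)$. Because $1+1/\alpha>1$, the integrand $1/|y-f_i(y)|$ fails to be integrable near $x_i$, and Karamata's theorem yields
\begin{align*}
U_i^\pm(x)\sim \frac{\alpha}{c_i^\pm}\cdot\frac{x}{\Psi(x)}\in \cR_{-1/\alpha}(0+),\quad\text{as $x\downarrow 0$,}
\end{align*}
with the convention that the right-hand side is read as $o(x/\Psi(x))$ when $c_i^\pm=\infty$. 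Writing $V(x):=\alpha x/\Psi(x)\in\cR_{-1/\alpha}(0+)$ and $R:=V^{-1}\in\cR_{-\alpha}(\infty)$ (asymptotic inverse), the scaling property of regular variation gives $(U_i^\pm)^{-1}(n)\sim R(c_i^\pm n)\sim (c_i^\pm)^{-\alpha}R(n)$, with the convention $\infty^{-\alpha}=0$. Setting $\Phi(n):=R(n)\sum_{j,\pm}v_j(c_j^\pm)^{-\alpha}$ then yields~{\rm (ii)} with $\beta_i^\pm$ matching~(\ref{betaipm}).

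For the converse {\rm (ii)}$\Rightarrow${\rm (i)}, suppose $\mu[x\in Y\;;\;Tx,\dots,T^n x\in A_i^\pm]\sim\beta_i^\pm\Phi(n)$ with $\Phi\in\cR_{-\alpha}(\infty)$. For any $(i,\pm)$ with $\beta_i^\pm>0$, the sequence $a_n$ defined as $f_i^n(1)-x_i$ (resp.\ $x_i-f_i^n(0)$) lies in $\cR_{-\alpha}(\infty)$, and the relation $U_i^\pm(a_n)\sim n$ together with the monotonicity of $U_i^\pm$ and $a_n$ forces $U_i^\pm\in\cR_{-1/\alpha}(0+)$ with an explicit asymptotic constant depending on $\beta_i^\pm$ and $v_i$. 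A short computation from $f_i'(y)=1/T'(f_i(y))$ and condition~(2) (indifferent fixed point with one-sided convexity) shows $T'>1$ on each side of $x_i$, hence $f_i'<1$ and $y\mapsto y-f_i(y)$ is strictly monotone on a one-sided neighborhood of $x_i$. The monotone density theorem therefore applies and recovers
\begin{align*}
|y-f_i(y)|\sim \alpha\Big(\frac{v_i}{\beta_i^\pm}\Big)^{1/\alpha}\Psi(|y-x_i|),\quad\text{as $y\to x_i\pm 0$,}
\end{align*}
where $\Psi(u):=u/\Phi^{-1}(u)\in\cR_{1+1/\alpha}(0+)$. Since this $\Psi$ is built solely from the universal $\Phi$, the same $\Psi$ serves every index with $\beta_i^\pm>0$. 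For indices with $\beta_j^\pm=0$, the bound $a_n=o(\Phi(n))$ forces $|y-f_j(y)|/\Psi(|y-x_j|)\to\infty$, i.e.\ $c_j^\pm=\infty$; the hypothesis $\beta\in[0,1)^{2d-2}$ ensures that at least two of the $\beta_i^\pm$'s are positive, hence at least two of the $c_i^\pm$'s are finite.

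The hard part is the converse direction: one must verify that the $\Psi$ produced from different base indices is literally the same function (not merely one among a family of $\cR_{1+1/\alpha}(0+)$ cousins), and justify the monotone density theorem. Both are handled by the observation that $\Psi$ is a universal transform of $\Phi$ together with the one-sided monotonicity afforded by condition~(2); the algebraic consistency of the recovered constants $c_i^\pm=\alpha(v_i/\beta_i^\pm)^{1/\alpha}$ with definition~(\ref{betaipm}) is then automatic from $\sum_{j,\pm}\beta_j^\pm=1$.
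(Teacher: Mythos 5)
Your proposal is correct and follows exactly the line the paper sketches: reduce both sides, via Proposition~\ref{prop:interval-map}(2) and $f_i^n(1)-x_i\sim (U_i^+)^{-1}(n)$, $x_i-f_i^n(0)\sim (U_i^-)^{-1}(n)$, to a statement about the asymptotics of $(U_i^\pm)^{-1}$, and then translate between (\ref{reg:T}) and $(U_i^\pm)^{-1}\in\cR_{-\alpha}(\infty)$ by Karamata's theorem, asymptotic inversion, and the monotone density theorem (using the one-sided monotonicity of $y\mapsto y-f_i(y)$ coming from $T'>1$). The paper simply asserts the proposition ``by the above arguments and by some basic facts of regular variation,'' so your proof is a reasonable filling-in of the same computation; the only cosmetic remark is that your recovered constants $c_i^\pm=\alpha(v_i/\beta_i^\pm)^{1/\alpha}$ equal the given ones only after rescaling $\Psi$ by the fixed factor $\alpha\bigl(\sum_{j,\pm}(c_j^\pm)^{-\alpha}v_j\bigr)^{1/\alpha}$, which is harmless since (\ref{betaipm}) and membership in $\cR_{1+1/\alpha}(0+)$ are both scale-invariant.
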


Combining Propositions \ref{prop:interval-map} and \ref{prop:equivalence-reg}, Theorem \ref{thm:main2} and Corollary \ref{cor:excur}, we obtain Corollaries \ref{cor:indiff-dir} and \ref{cor:indiff-inv}.

\subsection{Application to Markov chains on multiray}\label{subsec:markov}

Let $Z=(Z_k)_{k\geq0}$ be an irreducible and null-recurrent discrete-time Markov chain with an invariant measure $\tilde{\mu}$ on a countable discrete state space $\widetilde{X}=\{0\}+\sum_{i=1}^d \widetilde{A}_i$, where $\widetilde{A}_1,\dots,\widetilde{A}_d$ will be called the \emph{rays}, having the following property: 
$Z$ cannot skip the origin $0$ when it changes rays, i.e.,
the condition $Z_n \in \widetilde{A}_i$ and $Z_m \in \widetilde{A}_j$ for some $n<m$ and 
$i \neq j$ implies the existence of $n<k<m$ for which $Z_k = 0$.
We denote by $\bP_{\tilde{x}}$ the law of $Z$ with $Z_0=\tilde{x}\in \widetilde{X}$.
Set, for a probability measure $\tilde{\nu}$ on $\widetilde{X}$, and for $n\geq1$,
\begin{align*}
 \bP_{\tilde{\nu}} &:= \int_{\widetilde{X}} \bP_{\tilde{x}}(\cdot) \tilde{\nu} (d\tilde{x}),
 \\
 S_n &:=\bigg(\sum_{k=1}^n 1_{\widetilde{A}_1}(Z_k),\dots,\sum_{k=1}^n 1_{\widetilde{A}_d}(Z_k)\bigg).	
\end{align*}

By our theorems, we obtain the following corollary, which is a multiray extension of the $2$-ray result \cite{La}.

\begin{Cor}\label{markov}
Let $\alpha\in[0,1)$ and $\beta=(\beta_1,\dots,\beta_d)\in[0,1]^d$ with $\sum_{i=1}^d \beta_i=1$. We consider the following conditions:
\begin{itemize}
 \item[{\rm (i)}] There exists $\Phi\in\cR_{-\alpha}(\infty)$ such that
\begin{align*}
   \bP_0[Z_1,\dots,Z_n \in \widetilde{A}_i]
    \sim
    \beta_i \Phi(n), 
    \quad\text{as $n\to\infty$},\; i=1,\dots,d. 	
\end{align*}
 \item[{\rm (ii)}] For any probability measure $\tilde{\nu}$ on $\widetilde{X}$, the $S_n/n$ under $\bP_{\tilde{\nu}}$ converges in distribution to $\zeta_{\alpha,\beta}$, as $n\to\infty$.
\end{itemize}
Then, {\rm (i)} implies {\rm (ii)}. Furthermore, if $\zeta_{\alpha,\beta}$ is not trivial, then {\rm (ii)} implies {\rm (i)}. 	
\end{Cor}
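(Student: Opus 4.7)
The plan is to realize $Z$ as the shift on its path space and reduce Corollary~\ref{markov} to Corollary~\ref{cor:excur}. Let $X$ be the space of $\widetilde X$-valued sequences $(z_0,z_1,\ldots)$, $T$ the left shift, and $\mu$ the $\sigma$-finite Markov measure on $X$ whose marginal at time $0$ is $\tilde\mu$ and whose conditional law given $z_0$ is that of $Z$. Irreducibility and null-recurrence of $Z$ make $T$ a CEMPT on $(X,\cA,\mu)$. Set $Y:=\{z:z_0=0\}$ and $A_i:=\{z:z_0\in\widetilde A_i\}$; then $\mu(Y)=\tilde\mu(\{0\})\in(0,\infty)$ and $\mu(A_i)=\tilde\mu(\widetilde A_i)$, which we may assume infinite for each $i$ (else $\beta_i=0$ trivially by Remark~\ref{rem:birkoff}). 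The multiray property of $Z$ is precisely the condition that $Y$ dynamically separates $A_1,\ldots,A_d$, so Assumption~\ref{ass:ray} holds.

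Next, Assumption~\ref{ass:comp} is automatically satisfied. Writing $p$ for the transition kernel of $Z$, let $\hat p(x,y):=\tilde\mu(y)p(y,x)/\tilde\mu(x)$ be the reversed kernel and $\hat\bP_0$ the law of the reversed chain $\widehat Z$ started at $0$. Iterating $\tilde\mu(x)p(x,y)=\tilde\mu(y)\hat p(y,x)$ along excursions and using the ray property, a short duality computation yields, for every $k\geq 1$,
\begin{align*}
\hat T^k 1_{Y_k\cap A_i}=\hat\bP_0\big[\widehat Z_1,\ldots,\widehat Z_k\in\widetilde A_i\big]\cdot 1_Y
\quad\text{and}\quad
\mu(Y_k\cap A_i)=\tilde\mu(\{0\})\cdot\hat\bP_0\big[\widehat Z_1,\ldots,\widehat Z_k\in\widetilde A_i\big],
\end{align*}
while both quantities vanish for $k=0$. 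Summing over $0\leq k\leq n-1$ therefore gives
\begin{align*}
\frac{1}{w_i(n)}\sum_{k=0}^{n-1}\hat T^k 1_{Y_k\cap A_i}=\tilde\mu(\{0\})^{-1}\cdot 1_Y,
\end{align*}
independently of $n$, so $\mathfrak{H}_i$ is a singleton and is trivially strongly precompact in $L^1(\mu)$.

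The same reversal identity also yields $\mu\{x\in Y:Tx,\ldots,T^n x\in A_i\}=\tilde\mu(\{0\})\cdot\bP_0[Z_1,\ldots,Z_n\in\widetilde A_i]$, so condition~(i) in the present corollary is, up to the positive constant $\tilde\mu(\{0\})$ absorbed into $\Phi$, condition~(i) of Corollary~\ref{cor:excur}. For any probability measure $\tilde\nu$ on $\widetilde X$ the density $z\mapsto\tilde\nu(\{z_0\})/\tilde\mu(\{z_0\})$ exhibits $\bP_{\tilde\nu}$ as absolutely continuous with respect to $\mu$. Consequently Corollary~\ref{cor:excur} gives (i)$\Rightarrow$(ii); conversely, when $\zeta_{\alpha,\beta}$ is non-trivial, applying (ii) to $\tilde\nu=\delta_0$ and invoking the converse direction of Corollary~\ref{cor:excur} yields (i). The main obstacle is the reversal identity for $\hat T^k 1_{Y_k\cap A_i}$; once this algebraic computation is in place, the rest of the argument is formal.
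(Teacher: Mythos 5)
Your proposal is correct and follows essentially the same route as the paper: realize $Z$ as the shift on path space with $\mu=\bP_{\tilde\mu}$, check Assumption \ref{ass:ray}, show that $\frac{1}{w_i(n)}\sum_{k=0}^{n-1}\widehat{T}^k 1_{Y_k\cap A_i}$ is a fixed multiple of $1_Y$ (so $\mathfrak{H}_i$ is a singleton and Assumption \ref{ass:comp} is trivial), note $\bP_{\tilde\nu}\ll\mu$, and invoke Corollary \ref{cor:excur}. The only cosmetic difference is that you identify the constant via the time-reversed chain and the kernel at deterministic times $k$, whereas the paper gets the same identity in one stroke from the strong Markov property at the hitting time $\psi$; the content is the same.
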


Let us prove Corollary \ref{markov} using Corollary \ref{cor:excur}.  Without loss of generality, we may assume that $\tilde{\mu}(\{0\})=1.$ Set
\begin{align*}
  X&:=\widetilde{X}^{\bN}=\{x=(x_k)_{k\geq0}\:;\:x_0,x_1,\ldots \in \widetilde{X}\},\\
  Y&:=\{x\in X\:;\:x_0=0\},\\
A_i&:=\{x\in X\:;\:x_0\in \widetilde{A}_i\},
\quad i=1,\dots,d.
\\	
  \mu &:=\bP_{\tilde{\mu}}
      =\int_{\widetilde{X}} 
        \bP_{\tilde{x}}(\cdot)\tilde{\mu}(d\tilde{x}).
\end{align*}
We define the shift operator $T:X\to X$ by $T(x_k)_{k\geq0}=(x_{k+1})_{k\geq0}$.

By Theorem 4.5.3 of \cite{A97}, the shift operator $T$ is a CEMPT on $(X,\cB(X),\mu)$.
 Furthermore, all the conditions of Assumptions \ref{ass:ray} and {\color{black}\ref{ass:comp}} are satisfied. In fact, we define $\psi,Y_k$ and $w_i$ by (\ref{psi}), (\ref{Yk}) and (\ref{wA}), respectively. Then, for any $g\in L^{\infty}(\mu)$, 
\begin{align*}
  \int_X g\bigg( \frac{1}{w_i(n)} \sum_{k=0}^{n-1}
        \widehat{T}^k 1_{Y_k \cap A_i}\bigg)d\mu
   &=
   \frac{1}{w_i(n)}\int_{\bigcup_{k=0}^{n-1}(T^{-k}Y\cap A_i)}
                g \circ T^{\psi}d\mu   
   \\
   &=
   \frac{1}{w_i(n)}
   \int_{\widetilde{A}_i}\bE_{\tilde{x}}
    \big[g(T^{\psi}Z)\:;\:\psi<n\big]
    \:\tilde{\mu}(d\tilde{x})
   \\
   &=
   \frac{1}{w_i(n)}
   \int_{\widetilde{A}_i}\bE_{0}\big[g(Z)\big]
    \bP_{\tilde{x}}\big[\psi<n\big]
    \:\tilde{\mu}(d\tilde{x})
   \\
   &=
   \frac{1}{w_i(n)}
   \bE_{0}\big[g(Z)\big]
    \mu(x\in A_i\:;\:\psi(x)<n)
   \\
   &=
   \bE_0\big[g(Z)\big]
   = 
   \int_{Y} g d\mu.
\end{align*}
Here we used the strong Markov property.
Hence 
\begin{align*}
    \frac{1}{w_i(n)} \sum_{k=0}^{n-1}
           \widehat{T}^k 1_{Y_k \cap A_i}=1_Y, \quad\text{$\mu$-a.e., $n\geq1$,}
\end{align*} 
{\color{black}and the condition of Assumption \ref{ass:comp} is satisfied.}
The conditions of Assumption \ref{ass:ray} are trivially satisfied.

Note that, for each probability measure $\tilde{\nu}$ on $\widetilde{X}$, the probability measure $\bP_{\tilde{\nu}}$ is absolutely continuous w.r.t.$\:\mu$. Hence, by Corollary \ref{cor:excur}, we obtain Corollary \ref{markov}.

\section{Convergence of double Laplace transforms implying strong convergence in distribution}\label{sec:dob-lap}

Recall that, for $a=(a_1,\dots,a_d)$ and $b=(b_1,\dots,b_d)\in\bR^d$, 
we write $a\cdot b :=\sum_{i=1}^d a_i b_i$.
Suppose that $T$ is a CEMPT on $(X,\cA,\mu)$, and that Assumption \ref{ass:ray} holds. Write 
\begin{align}\label{tildeS}
\widetilde{S}_{\lambda,n,t}:=\exp(-\lambda\cdot S_n/t),
\quad \lambda\in{\color{black}[0,\infty)^d},\;n\geq0, \;t>0,
\end{align}
{\color{black}where $S_n$ has been defined by (\ref{occupation}).}
 In the following, $\lfloor t \rfloor$ means the greatest integer that is less than or equal to $t$.
\begin{Lem}\label{lem:main-a}
Suppose that $T$ is a CEMPT on $(X,\cA,\mu)$, and that Assumption \ref{ass:ray} holds. Let $\zeta$ be a $[0,\infty)^d$-valued random variable, and $I\subset(0,\infty)$ a {\color{black}non-empty open} interval. 
Assume that for any $q\in I$, $\lambda\in[0,\infty)^d$  
and for any probability measure  
$\nu\ll\mu$ on $(X,\cA)$, as $t\to\infty$,
  \begin{align*}
    \int_0^\infty du\;e^{-qu}
    \int_X d\nu\;\widetilde{S}_{\lambda,\lfloor ut \rfloor,t}
     \to 
     \int_0^\infty du\;e^{-qu}
       \bE\left[\exp(-u\lambda\cdot \zeta)\right]. 
  \end{align*}
Then $S_n/n \overset{\cL(\mu)}{\Longrightarrow} \zeta$.
\end{Lem}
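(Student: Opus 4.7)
The plan is to reduce the strong distributional convergence $S_n/n\overset{\cL(\mu)}{\Longrightarrow}\zeta$ to pointwise convergence of the multivariate Laplace transform of $S_n/n$ under each probability measure $\nu\ll\mu$, and then extract this convergence from the double-Laplace hypothesis via a Helly-type selection argument. Fix $\lambda\in[0,\infty)^d$ and $\nu\ll\mu$, and set
\[
 \varphi_t(u):=\int_X \widetilde{S}_{\lambda,\lfloor ut\rfloor,t}\,d\nu,
 \qquad
 \varphi(u):=\bE\bigl[\exp(-u\lambda\cdot\zeta)\bigr],\quad u\geq 0.
\]
Since each coordinate of $S_n$ is nondecreasing in $n$ and $\lambda$ has nonnegative entries, $\varphi_t$ is nonincreasing on $[0,\infty)$ and bounded by $1$; the candidate limit $\varphi$ is continuous and bounded by $1$. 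The hypothesis says that the Laplace transforms $L\varphi_t(q):=\int_0^\infty e^{-qu}\varphi_t(u)\,du$ converge to $L\varphi(q)$ for every $q\in I$.

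Given an arbitrary sequence $t_k\to\infty$, Helly's selection theorem produces a subsequence along which $\varphi_{t_k}$ converges at every continuity point to some nonincreasing $g:[0,\infty)\to[0,1]$. Dominated convergence upgrades this to $L\varphi_{t_k}(q)\to Lg(q)$ for every $q>0$, so $Lg=L\varphi$ on $I$. Because both functions are holomorphic on $\{\operatorname{Re}q>0\}$ (their defining integrals being absolutely bounded by $1/\operatorname{Re}q$), the identity theorem extends the equality to all $q>0$, and uniqueness of the Laplace transform, combined with continuity of $\varphi$ and monotonicity of $g$, forces $g=\varphi$ identically. The sub-subsequence principle then gives $\varphi_t(u)\to\varphi(u)$ for every $u\geq 0$.

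Specializing to $u=1$ and setting $n=\lfloor t\rfloor$, the bound $\sum_{i=1}^d S_n^{(i)}\leq n$ together with $|e^{-a}-e^{-b}|\leq|a-b|$ yields
\[
 \Big|\,\varphi_t(1)-\int_X \exp\!\bigl(-\lambda\cdot S_n/n\bigr)\,d\nu\,\Big|
 \;\leq\; \frac{t-n}{t}\cdot \max_{i}\lambda_i
 \;\longrightarrow\; 0,
\]
so $\int_X \exp(-\lambda\cdot S_n/n)\,d\nu\to\bE[\exp(-\lambda\cdot\zeta)]$ for every $\lambda\in[0,\infty)^d$. By the continuity theorem for Laplace transforms of $[0,\infty)^d$-valued random variables, this implies weak convergence of $S_n/n$ under $\nu$ to the law of $\zeta$, and arbitrariness of $\nu\ll\mu$ concludes the proof.

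The main obstacle is precisely the passage from Laplace convergence on the (arbitrary, possibly small) open interval $I$ to pointwise convergence of $\varphi_t$ at a single value of $u$; this is handled by exploiting the monotonicity of $u\mapsto\varphi_t(u)$ in place of a tightness argument, with analytic continuation doing the work of enlarging the parameter set on which one knows the limit.
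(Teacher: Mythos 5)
Your proof is correct, and it takes a route that unpacks what the paper leaves to a cited theorem. The paper isolates the key step (passing from convergence of $q\mapsto\int_0^\infty e^{-qu}f_t(u)\,du$ on the interval $I$ to pointwise convergence $f_t(u)\to f_\infty(u)$ for non-increasing $f_t$) into Lemma \ref{lem:lap}, which it proves by invoking Feller's extended continuity theorem (Theorem XIII.1.2a of \cite{F2}) to get vague convergence $\int_a^b f_t\to\int_a^b f_\infty$, and then uses monotonicity of $f_t$ together with continuity of $f_\infty$ at the point. You instead re-derive this directly: Helly selection to obtain subsequential monotone limits, dominated convergence to pass through the Laplace transform, the identity theorem for holomorphic functions to propagate equality from $I$ to all of $\{\operatorname{Re}\,q>0\}$, uniqueness of the Laplace transform plus monotonicity to identify the limit as $\varphi$, and the sub-subsequence principle to get full convergence. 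What your version buys is that it makes explicit the step that is hidden in the citation to Feller — namely, that the hypothesis only gives Laplace-transform convergence on a possibly tiny interval $I$, and analyticity is what enlarges the parameter set; Feller's theorem as usually stated assumes convergence on a half-line. The cost is a somewhat longer derivation. One small simplification you could make at the end: the error estimate for $\bigl|\varphi_t(1)-\int_X\exp(-\lambda\cdot S_n/n)\,d\nu\bigr|$ is unnecessary, since taking $t=n$ and $u=1$ gives $\lfloor ut\rfloor=n$ and hence $\varphi_n(1)=\int_X\exp(-\lambda\cdot S_n/n)\,d\nu$ exactly; the paper handles this instead by the substitution $t=n/u$ for general $u$.
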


\begin{Lem}\label{lem:lap}
For $n \in \bN\cup\{\infty\}$, let $f_n :(0,\infty)\to[0,\infty)$ 
be a non-increasing function. Assume that there exists a {\color{black}non-empty open} interval $I\subset(0,\infty)$ such that for any $q\in I$, as $n\to\infty$,
  \begin{align*}
    \int_0^\infty e^{-qu}f_n(u)du \to 
     \int_0^\infty e^{-qu}f_\infty (u)du < \infty.
  \end{align*}
Then $f_n(u) \to f_\infty (u)$ for all continuity points $\color{black}u\in(0,\infty)$ of $f_\infty$.
\end{Lem}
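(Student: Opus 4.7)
I would reduce the pointwise convergence of $f_n$ to that of its primitives $F_n(t):=\int_0^t f_n(u)\,du$, apply a Helly-type selection, identify the limit via Laplace uniqueness, and then recover $f_n$ by sandwiching it between difference quotients of $F_n$. First, fix $q_0\in I$ and set $M:=\sup_n\int_0^\infty e^{-q_0 u}f_n(u)\,du$, which is finite by the convergence hypothesis. Since $f_n$ is non-negative and non-increasing,
\[
M\ge\int_0^a e^{-q_0 u}f_n(u)\,du\ge f_n(a)\cdot\frac{1-e^{-q_0 a}}{q_0},
\]
so $f_n(a)\le C_a$ for every $a>0$, uniformly in $n$. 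Consequently each $f_n$ is locally integrable; $F_n$ is continuous and non-decreasing with $F_n(0)=0$; and the estimate $e^{-q_0 t}F_n(t)\le\int_0^t e^{-q_0 u}f_n(u)\,du\le M$ gives $F_n(t)\le Me^{q_0 t}$. Integration by parts yields $\int_0^\infty e^{-qu}f_n\,du=q\int_0^\infty e^{-qu}F_n\,du$ for $q>q_0$, so the Laplace transforms of $F_n$ converge on $I\cap(q_0,\infty)$ to those of $F_\infty$.

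Next, Helly's selection theorem together with a diagonal argument produces, from any subsequence, a further subsequence $(F_{n_k})$ converging at every continuity point of a non-decreasing limit $G$. The bound $F_{n_k}(u)\le Me^{q_0 u}$ permits dominated convergence at the Laplace level, so $\int e^{-qu}G\,du=\int e^{-qu}F_\infty\,du$ on an open sub-interval; uniqueness of Laplace transforms of locally integrable functions then forces $G=F_\infty$ almost everywhere, and matching left and right limits through sequences avoiding the exceptional null set, combined with continuity of $F_\infty$, upgrades this to $G=F_\infty$ everywhere. Finally, monotonicity of $f_n$ furnishes the sandwich
\[
\frac{F_n(u_0+h)-F_n(u_0)}{h}\le f_n(u_0)\le\frac{F_n(u_0)-F_n(u_0-h)}{h},\qquad h\in(0,u_0),
\]
so letting $n\to\infty$ along $n_k$ and then $h\downarrow 0$ yields $f_\infty(u_0+)\le\liminf_k f_{n_k}(u_0)\le\limsup_k f_{n_k}(u_0)\le f_\infty(u_0-)$, which collapses to $f_\infty(u_0)$ at continuity points. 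A standard subsequence argument upgrades this to convergence of the whole sequence.

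The main obstacle I anticipate is the potentially unbounded behavior of $f_n$ near $0$: a direct dominated-convergence or direct Helly argument at the level of $f_n$ cannot close the Laplace-transform identification, because mass could concentrate near $0$ without showing up in any pointwise limit. Passing to $F_n$, which enjoys the uniform local bound $F_n(t)\le Me^{q_0 t}$, is precisely what sidesteps this difficulty, and the monotonicity of $f_n$ (unavailable in the general continuity theorem for Laplace transforms) is what lets us push the $F_n$-convergence back down to pointwise convergence of $f_n$ itself.
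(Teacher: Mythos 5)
Your argument is correct, but it takes a genuinely different route from the paper. The paper disposes of the lemma in a few lines: it invokes the extended continuity theorem for Laplace transforms (Theorem XIII.1.2a of Feller) to get $\int_a^b f_n(u)\,du \to \int_a^b f_\infty(u)\,du$ for all $0<a<b<\infty$, and then argues directly at a continuity point $a$: if $\liminf_n f_n(a) < f_\infty(a)$, monotonicity of $f_n$ bounds $\int_a^b f_n$ by $(b-a)f_n(a)$ while continuity of $f_\infty$ at $a$ keeps $\int_a^b f_\infty$ close to $(b-a)f_\infty(a)$ for $b$ near $a$, a contradiction (and symmetrically for the limsup). You instead re-derive the needed consequence of the continuity theorem from scratch: uniform local bounds on $f_n$, passage to the primitives $F_n$ with the bound $F_n(t)\le Me^{q_0t}$, Helly selection plus dominated convergence, identification of the limit by Laplace uniqueness, and finally a difference-quotient sandwich using monotonicity, with a subsequence argument to recover the full sequence. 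This is self-contained and makes explicit why monotonicity is the extra ingredient beyond the continuity theorem, at the cost of length; the paper's proof buys brevity by citing Feller and by working with the ``vague'' interval-convergence directly rather than through $F_n$. One small point to tighten in your write-up: at the identification step the two Laplace transforms are only known to agree on an open sub-interval, so before invoking uniqueness you should note that both transforms are finite, hence analytic, on a common half-line $(q_0,\infty)$, and extend the equality there by the identity theorem; with that remark (and the observation that a.e.\ convergence of $F_{n_k}$ holds because a monotone $G$ has at most countably many discontinuities) every step closes.
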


\Proof{
By the continuity theorem (see Theorem XIII.1.2a of \cite{F2}), we have, for any $0<a,b<\infty$, as $n\to\infty$,
\begin{align}\label{vague}
	\int_a^b f_n(u)du
	\to \int_a^b f_\infty (u)du.
\end{align}
Let $\color{black}a\in(0,\infty)$ be a continuity point of $f_\infty$. If $\liminf_{n\to\infty} f_n({\color{black}a}) < f_\infty({\color{black}a})$, then there exists $\color{black}b\in(a,\infty)$ such that $\liminf_{n\to\infty}\int_a^b f_n(u)du < \int_a^b f_\infty(u)du$ since for each $n$, the $f_n$ is non-increasing, and $f_\infty$ is continuous at $\color{black}a$.
This contradicts ($\ref{vague}$). Hence
$\liminf_{n\to\infty} f_n({\color{black}a}) \geq f_\infty({\color{black}a})$. By similar arguments we can also obtain $\limsup_{n\to\infty} f_n({\color{black}a}) \leq f_\infty({\color{black}a})$.
}

\begin{proof}[Proof of Lemma \ref{lem:main-a}]
By the assumption and Lemma \ref{lem:lap}, we have for any $u>0$,  
$\lambda\in[0,\infty)^d$ and for any probability measure $\nu\ll\mu$, 
  \begin{align*}
  \color{black}
    \lim_{n\to\infty}\int_X \exp(-u\lambda\cdot S_n/n)d\nu
    =
    \lim_{t\to\infty}\int_X \widetilde{S}_{\lambda,\lfloor ut \rfloor,t}d\nu
    = \bE\left[ \exp(-u\lambda\cdot \zeta)\right].
  \end{align*}
{\color{black}
Therefore, $S_n/n \overset{\cL(\mu)}{\Longrightarrow} \zeta$.}
\end{proof}

\section{Integrating transforms}\label{sec:integrating}

Suppose that $T$ is a CEMPT on $(X,\cA,\mu)$ and that Assumption \ref{ass:ray} holds. Set
\begin{align}
   {\color{black}\varphi}(x)&:=\min\{k\geq1 \;; \;T^k x\in Y\}, \\
   	B_{i,k}&:=Y \cap T^{-1}A_i \cap \{{\color{black}\varphi}=k\}, \notag \\
   	       &=\{x\in Y\;;\; Tx,\cdots,T^{k-1}x\in A_i,\;T^k x\in Y\},
   	       \label{Bik}
\end{align}
for $i=1,\dots,d$ and $k\geq2$. 
The set $B_{i,k}$ consists of elements which start from $Y$, stay on $A_i$ at time $1,\dots,k-1$ and arrive in $Y$ at time $k$. Recall that $Y_n$,  $w_i$ and $\color{black}Q_i$ have been defined by (\ref{Yk}), (\ref{wA}) and (\ref{hatwA}), respectively. 
Note that $Y_n=Y^c\cap \{{\color{black}\varphi} =n\}$ for $n\geq1$
and 
\begin{align*}
	\mu(B_{i,k}) =\int_Y \widehat{T}^k 1_{B_{i,k}}d\mu,
\end{align*}
for $i=1,\dots,d$ and $k\geq2$.
{\color{black}The following lemma was obtained in (6.5) of \cite{TZ}.}

\begin{Lem}\label{lem:excur}
Suppose that $T$ is a CEMPT on $(X,\cA,\mu)$ and that Assumption \ref{ass:ray}  holds. 
Then, for $n\geq1$ and $i = 1,\dots,d$,
	\begin{align}\label{eq:excur}
		\widehat{T}^n 1_{Y_n\cap A_i}=
		  \sum_{k>n} \widehat{T}^k1_{B_{i,k}},
		  \quad \text{$\mu$-a.e.}
	\end{align}
In particular, for $n\geq1$, $s>0$ and $i=1,\dots,d$,
   \begin{align}\label{eq:excur-int}
      w_i(n+1)-w_i(n)&=\mu(Y_n\cap A_i) = \sum_{k>n}\mu(B_{i,k}),
   \\ \label{eq:excur-lap}
     {\color{black}Q_i}(s) &=
	  \sum_{n\geq1}e^{-ns}\sum_{k>n}\mu(B_{i,k}).
   \end{align}
\end{Lem}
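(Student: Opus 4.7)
The plan is to establish the pointwise identity \eqref{eq:excur} by testing both sides against an arbitrary $g \in L^\infty(\mu)$ and invoking the defining duality $\int (\widehat{T}^n f) g \, d\mu = \int f (g \circ T^n) \, d\mu$. Under this pairing, the task reduces to proving
\begin{align*}
\int_{Y_n \cap A_i} g \circ T^n \, d\mu \;=\; \sum_{k>n} \int_{B_{i,k}} g \circ T^k \, d\mu
\quad\text{for every } g \in L^\infty(\mu),
\end{align*}
after which the $\mu$-a.e.\ equality of the two densities follows at once.

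To evaluate the left-hand side I would invoke the Kac-type tower decomposition relative to $Y$: because $T$ is a CEMPT and $\mu(Y) \in (0,\infty)$, for any $f \geq 0$ one has
\begin{align*}
\int_X f \, d\mu \;=\; \int_Y \sum_{m=0}^{\varphi(y)-1} f(T^m y) \, d\mu(y),
\end{align*}
with $\varphi(y) < \infty$ for $\mu$-a.e.\ $y \in Y$. Applied to $f(x) = 1_{Y_n \cap A_i}(x) \, g(T^n x)$, this reduces matters to identifying, for each $y \in Y$ with $\varphi(y) = k$, the values $m \in \{0,\ldots,k-1\}$ for which $T^m y \in Y_n \cap A_i$. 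The index $m=0$ never contributes because $Y$ and $A_i$ are disjoint. For $m \geq 1$, dynamical separation forces the excursion $(Ty,\ldots,T^{k-1}y)$ to lie entirely on a single ray, so the condition $T^m y \in A_i$ already implies $y \in B_{i,k}$. The further requirement $T^{m+n}y \in Y$ then pins down $m = k-n$ uniquely, which is admissible precisely when $k>n$. Regrouping by $k$ yields the claimed integral identity, and comparing densities delivers \eqref{eq:excur}.

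The two ``in particular'' consequences then follow routinely. Taking $g \equiv 1$ in \eqref{eq:excur} and integrating, together with $\mu(Y_n \cap A_i) = w_i(n+1) - w_i(n)$ read off from the definition of $w_i$, gives \eqref{eq:excur-int}. Multiplying \eqref{eq:excur-int} by $e^{-ns}$, summing over $n \geq 0$, and observing $\mu(Y_0 \cap A_i) = \mu(Y \cap A_i) = 0$ (since $Y$ and $A_i$ are disjoint), recovers the Laplace-transform formula \eqref{eq:excur-lap} by the definition of $Q_i$.

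The only non-routine step is the combinatorial bookkeeping in the Kac decomposition: one must verify that each excursion from $y \in Y$ either misses $Y_n \cap A_i$ altogether or hits it at a \emph{single} time $m$, and that the hit criterion is precisely $y \in B_{i,m+n}$. Dynamical separation of the rays by $Y$ is exactly the hypothesis that makes this dichotomy clean; without it a single excursion could jump between rays and contribute to several $Y_n \cap A_j$'s simultaneously, ruining the simple regrouping.
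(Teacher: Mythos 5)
You reduce \eqref{eq:excur} by duality to the identity $\int_{Y_n\cap A_i} g\circ T^n\,d\mu=\sum_{k>n}\int_{B_{i,k}}g\circ T^k\,d\mu$ and then derive this from the tower (``Kac-type'') decomposition
\begin{align*}
\int_X f\,d\mu=\int_Y\sum_{m=0}^{\varphi(y)-1}f(T^m y)\,d\mu(y),\qquad f\geq 0 .
\end{align*}
The excursion bookkeeping that follows is essentially correct, but this decomposition is asserted with no proof and no reference, and it is exactly where all the content of the lemma lies: \eqref{eq:excur} \emph{is} this decomposition tested on $f=1_{Y_n\cap A_i}\,(g\circ T^n)$. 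In the non-invertible, infinite-measure setting it is not the textbook Kac formula $\int_Y\varphi\,d\mu=\mu(X)$ but the stronger statement that $\mu$ is the push-forward, under $(y,m)\mapsto T^m y$, of the tower measure over the induced system $(Y,\mu|_Y,T_Y,\varphi)$; it genuinely uses conservativity and ergodicity, not just measure preservation. To see what is at stake: dynamical separation gives the set identity $T^{-1}(Y_n\cap A_i)=B_{i,n+1}+(Y_{n+1}\cap A_i)$, and since $\widehat{T}(f\circ T)=f$ one can telescope to $\widehat{T}^n 1_{Y_n\cap A_i}=\sum_{k=n+1}^{m}\widehat{T}^k 1_{B_{i,k}}+\widehat{T}^m 1_{Y_m\cap A_i}$; the entire problem is to show that the non-increasing remainder $\widehat{T}^m 1_{Y_m\cap A_i}$ (supported on $Y$, with $\int_Y \widehat{T}^m 1_{Y_m\cap A_i}\,d\mu=\mu(Y_m\cap A_i)$) tends to $0$, and this recurrence input fails for merely measure-preserving $T$ (e.g.\ the shift on $\bZ$ with counting measure and $Y=\{0\}$). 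Your proof hides precisely this step inside the unproved display. To close the gap, either cite the tower decomposition for CEMPTs precisely, or prove it: for instance, the push-forward of the tower measure is a $\sigma$-finite $T$-invariant measure absolutely continuous w.r.t.\ $\mu$ which agrees with $\mu$ on $Y$, hence equals $\mu$ by the conservative-ergodic uniqueness of the a.c.\ invariant measure; alternatively pass to the natural extension, where the limit of the remainder is carried by points of $Y$ whose backward orbit never meets $Y$, a null set by conservativity. (For comparison, the paper itself gives no proof and quotes the identity from (6.5) of \cite{TZ}.)

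A smaller point: within an excursion of length $\varphi(y)=k$, the requirement ``$T^{m+n}y\in Y$'' alone does not pin down $m=k-n$, since $T^{m+n}y$ may lie in $Y$ at a later return time $\geq k$. What pins it down is the full condition $\psi(T^m y)=n$: for $1\leq m\leq k-1$ the first visit of $T^m y$ to $Y$ occurs exactly at time $k-m$, so $\psi(T^m y)=k-m$, forcing $m=k-n$, admissible only when $k>n$; combined with separation (the excursion stays on one ray) this yields the criterion $y\in B_{i,k}$ with $k>n$. With that repaired, the regrouping and the two ``in particular'' statements \eqref{eq:excur-int}--\eqref{eq:excur-lap} follow as you say.
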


{\color{black}

The following lemma is a slight modification of Proposition 3.1 of \cite{Z07}. The proof is almost the same and so we omit it.

\begin{Lem}\label{lem:int}
Let $T$ be a CEMPT on $(X,\cA,\mu)$, let $(R_{n,t}\;;\;n\geq0, \;t>0)$ be a family of measurable functions $R_{n,t}:X\to[0,\infty)$, and let $\mathfrak{H}\subset \{u\in L^1(\mu)\;;\;\text{$\int ud\mu=1$ and $u\geq0$} \}$. Assume that
\begin{itemize}
\item[\rm (i)] for any probability measure $\nu\ll\mu$ and $\varepsilon>0$, 
\begin{align*}
	\nu\Big\{\sup_{t>0}|R_{n,t}\circ T - R_{n,t}|>\varepsilon\Big\}\to 0,
	\quad\text{as $n\to\infty$;}
\end{align*}
\item[\rm (ii)] $\sup\{\|R_{n,t}\|_{L^\infty(\mu)}\;;\;n\geq0,\;t>0\}<\infty$;
\item[\rm (iii)] $\mathfrak{H}$ is strongly precompact in $L^1(\mu)$.
\end{itemize}
Then, it holds that, for any $b>0$, 
\begin{align*}
	\sup_{u,u^*\in\; \bar{\rm co}(\mathfrak{H})}
	\bigg|
	\int_X \sum_{n\geq0}e^{-nbt^{-1}}R_{n,t}(u-u^*)d\mu
	\bigg|
	=
	o(t),
	\quad \text{as $t\to\infty$.}
\end{align*}
where $\bar{\rm co}(\mathfrak{H})$ denotes the closure of the convex hull of $\mathfrak{H}$ in $L^{1}(\mu)$.	
\end{Lem}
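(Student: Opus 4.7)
The approach exploits the interplay between the asymptotic $T$-invariance provided by (i), the $L^{1}$-precompactness (iii), and the fact that $T$ is conservative ergodic on the infinite-measure space $(X,\mu)$. Heuristically, (i) forces $R_{n,t}$ to become $T$-invariant as $n\to\infty$; conservative ergodicity on an infinite-measure space makes a truly $T$-invariant bounded function behave like a constant when paired with $L^{1}$-densities; and since $\int(u-u^{*})\,d\mu=0$, the pairing $\int R_{n,t}(u-u^{*})\,d\mu$ must become small. The precompactness (iii), extended to $\bar{\rm co}(\mathfrak{H})$ by Mazur's theorem (the closed convex hull of a norm-precompact subset of $L^{1}(\mu)$ is norm-precompact), supplies the uniformity.

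A preliminary step is to upgrade (i) to a form uniform over $u\in\bar{\rm co}(\mathfrak{H})$: for every $\varepsilon>0$ there is $n_{0}$ such that for all $n\geq n_{0}$, $t>0$ and $u\in\bar{\rm co}(\mathfrak{H})$,
\[
\int|R_{n,t}-R_{n,t}\circ T|\,u\,d\mu<\varepsilon.
\]
This follows by taking a finite $\varepsilon$-net in the precompact set $\bar{\rm co}(\mathfrak{H})$, splitting the integrand at the event $\{\sup_{t>0}|R_{n,t}-R_{n,t}\circ T|>\varepsilon\}$, and controlling the exceptional piece via (ii) together with uniform integrability inherited by $\bar{\rm co}(\mathfrak{H})$.

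The core estimate combines the duality $\int (R_{n,t}\circ T)u\,d\mu=\int R_{n,t}\,\widehat{T}u\,d\mu$ with the telescoping bound $|R_{n,t}-R_{n,t}\circ T^{k}|\leq\sum_{j=0}^{k-1}|R_{n,t}-R_{n,t}\circ T|\circ T^{j}$. Averaging over $k=0,\dots,N-1$ and dualising back produces
\[
\int R_{n,t}(u-u^{*})\,d\mu=\int R_{n,t}\,A_{N}(u-u^{*})\,d\mu+\mathrm{Err}(n,N,u,u^{*}),
\]
with $A_{N}:=N^{-1}\sum_{k=0}^{N-1}\widehat{T}^{k}$; the error $\mathrm{Err}$ is controlled, up to a factor of order $N$, by the uniform version of (i). After multiplying by $e^{-nb/t}$ and summing, the contribution of $n<n_{0}$ is $O(n_{0})=o(t)$, while that of $n\geq n_{0}$ is $O(N\varepsilon\,t)=o(t)$ upon letting $\varepsilon\downarrow 0$.

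The main obstacle is the residual term $\int R_{n,t}\,A_{N}(u-u^{*})\,d\mu$, which must be made uniformly small in $n,t$ as $N\to\infty$. In the infinite-measure setting $A_{N}u$ does not vanish in $L^{1}$ (its mass is preserved), but only locally: for any $F\in\cA$ with $\mu(F)<\infty$, the identity $\int_{F}A_{N}u\,d\mu=\int u\cdot N^{-1}\sum_{k<N}1_{F}\circ T^{k}\,d\mu$ and Remark \ref{rem:birkoff} combined with dominated convergence give $\int_{F}A_{N}u\,d\mu\to 0$, uniformly in $u\in\bar{\rm co}(\mathfrak{H})$ by passing through a finite net. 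I would use the tightness of $\bar{\rm co}(\mathfrak{H})$ to pick $F$ of finite $\mu$-measure carrying almost all the mass of $u$ and $u^{*}$, apply the local Cesàro vanishing on $F$, and handle $F^{c}$ via (ii) together with the cancellation $\int(u-u^{*})\,d\mu=0$. The delicate point is the ordering of quantifiers, which should be $\varepsilon\to F\to N\to n_{0}\to t$; carried out in this order, the argument yields the claimed $o(t)$ bound, as in Zweim\"uller's Proposition 3.1 in \cite{Z07}.
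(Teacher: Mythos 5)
The paper itself does not spell out a proof of this lemma (it defers to Proposition 3.1 of \cite{Z07}), so your attempt has to be judged on its own. Your skeleton is the right one and matches the standard argument: upgrade (i) to uniformity over the compact set $\bar{\rm co}(\mathfrak{H})$ (and over $\{\widehat{T}^j u\,;\,u\in\bar{\rm co}(\mathfrak{H}),\,j<N\}$, which is again precompact), telescope and dualise to replace $u$ by the Ces\`aro average $A_N u=N^{-1}\sum_{k<N}\widehat{T}^k u$, and split the $n$-sum at a level $n_0$, the small-$n$ part being $O(n_0)=o(t)$. The genuine gap is in your treatment of the residual term $\int R_{n,t}\,A_N(u-u^*)\,d\mu$. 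Your tightness argument only shows that the mass of $A_N u$ and of $A_N u^*$ leaves every fixed set $F$ of finite measure; it says nothing about their \emph{difference} on $F^c$, which is where essentially all the mass sits for large $N$. The bound you have there is $\big|\int_{F^c}R_{n,t}A_N(u-u^*)d\mu\big|\le C\big(\int_{F^c}A_Nu\,d\mu+\int_{F^c}A_Nu^*\,d\mu\big)\approx 2C$, and the scalar cancellation $\int(u-u^*)d\mu=0$ cannot be brought to bear because $R_{n,t}$ is an arbitrary bounded function on $F^c$, not (approximately) a constant. So the step ``handle $F^c$ via (ii) together with the cancellation'' does not go through, and this is precisely the heart of the lemma.

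The missing ingredient is a norm statement, not a local one: for a CEMPT one has $\|A_N(u-u^*)\|_{L^1(\mu)}\to0$ for any two probability densities, and hence, by Mazur compactness of $\bar{\rm co}(\mathfrak{H})$ and $\|A_N\|\le1$, $\sup_{u,u^*\in\bar{\rm co}(\mathfrak{H})}\|A_N(u-u^*)\|_{L^1(\mu)}\to0$ as $N\to\infty$; this bounds the residual term by $C\sup\|A_N(u-u^*)\|_1$ uniformly in $n$ and $t$ and finishes the proof. To see the norm convergence, note that the annihilator of $(I-\widehat{T})L^1(\mu)$ in $L^\infty(\mu)$ consists of the $h$ with $h\circ T=h$ $\mu$-a.e., which by conservative ergodicity are the constants; hence $\overline{(I-\widehat{T})L^1(\mu)}=\{f\in L^1\,;\,\int f\,d\mu=0\}\ni u-u^*$, and on coboundaries $f=(I-\widehat{T})g$ one has $\|A_Nf\|_1\le 2\|g\|_1/N\to0$, which extends to the closure. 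This coboundary/mean-ergodic argument (it is where conservative ergodicity really enters, beyond Remark \ref{rem:birkoff}) is the step your proposal lacks; with it in place of the $F/F^c$ splitting, your outline becomes a correct proof along the same lines as Zweim\"uller's.
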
}

From the above two lemmas we can prove the following. 
{\color{black}
\begin{Cor}\label{cor:int}
Suppose that $T$ is a CEMPT on $(X,\cA,\mu)$ and that Assumptions \ref{ass:ray} and {\color{black}\ref{ass:comp}} hold. Then, for any $\lambda\in{\color{black}[0,\infty)^d}$,  $i=1,\dots,d$ and $a,b>0$, and for any probability measure $\nu\ll\mu$, as $t\to\infty$,
\begin{align}\label{eq:i}
    \int_Y \bigg(\sum_{n\geq2}(1-e^{-nat^{-1}})
        \widehat{T}^{n}1_{B_{i,n}}\bigg)
        &\bigg(\sum_{n\geq0}e^{-nbt^{-1}}
          \widetilde{S}_{\lambda,n,t}\bigg)d\mu
    \notag\\
    \sim\;
     &at^{-1}{\color{black}Q_i}(at^{-1})
     \sum_{n\geq0}e^{-nbt^{-1}}
      \int_X \widetilde{S}_{\lambda,n,t}d\nu,
\end{align}
{\color{black}where $\widetilde{S}_{\lambda,n,t}$ and $Q_i$ have been defined by {\rm (\ref{tildeS})} and {\rm (\ref{hatwA})}, respectively. }
\end{Cor}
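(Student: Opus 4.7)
The plan is to rewrite $f_{i,t}:=\sum_{n\geq 2}(1-e^{-nat^{-1}})\widehat{T}^n 1_{B_{i,n}}$, modulo a lower-order remainder, as $(1-e^{-at^{-1}})Q_i(at^{-1})$ times a probability density $u_t\in\bar{\rm co}(\mathfrak{H}_i)$, and then to invoke Lemma \ref{lem:int} to replace $u_t$ by $\rho:=d\nu/d\mu$ inside the integration against $g_{\lambda,t}:=\sum_{n\geq 0}e^{-nbt^{-1}}\widetilde{S}_{\lambda,n,t}$. First I would apply Lemma \ref{lem:excur} in the form $\widehat{T}^n 1_{B_{i,n}}=\widehat{T}^{n-1}1_{Y_{n-1}\cap A_i}-\widehat{T}^n 1_{Y_n\cap A_i}$ (valid for $n\geq 2$) and perform Abel summation to obtain
\[
f_{i,t}=A_{i,t}+(1-e^{-at^{-1}})\widehat{T}1_{Y_1\cap A_i},\quad\text{where}\quad A_{i,t}:=(1-e^{-at^{-1}})\sum_{n\geq 1}e^{-nat^{-1}}\widehat{T}^n 1_{Y_n\cap A_i}.
\]
Direct integration yields $\int A_{i,t}\,d\mu=(1-e^{-at^{-1}})Q_i(at^{-1})$, and a second Abel summation exhibits $u_t:=A_{i,t}/\int A_{i,t}\,d\mu$ as a convex combination of the members $w_i(n+1)^{-1}\sum_{k=0}^n\widehat{T}^k 1_{Y_k\cap A_i}$ of $\mathfrak{H}_i$, so $u_t\in\bar{\rm co}(\mathfrak{H}_i)$. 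The remainder $(1-e^{-at^{-1}})\widehat{T}1_{Y_1\cap A_i}$ has $L^1$-norm $O(t^{-1})$ and, since $\|g_{\lambda,t}\|_{L^\infty}=O(t)$, contributes only $O(1)$ to the left-hand side---negligible against a right-hand side of order $Q_i(at^{-1})\to\infty$.

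Next I would invoke Lemma \ref{lem:int} with $R_{n,t}:=\widetilde{S}_{\lambda,n,t}$ and $\mathfrak{H}:=\mathfrak{H}_i\cup\{\rho\}$; appending a single element preserves strong precompactness, so condition (iii) holds, and (ii) is immediate from $0\leq\widetilde{S}_{\lambda,n,t}\leq 1$. The main obstacle is to verify condition (i), namely $\sup_{t>0}|R_{n,t}\circ T-R_{n,t}|\to 0$ in $\nu$-probability as $n\to\infty$. The case $\lambda=0$ is trivial; otherwise I would fix $\varepsilon>0$ and split at some large $T_0$. For $t\geq T_0$, the estimate $|e^{-x}-e^{-y}|\leq|x-y|$ together with the coordinatewise bound $|S_n\circ T-S_n|\leq 2$ gives $|R_{n,t}\circ T-R_{n,t}|\leq 2|\lambda|_1/T_0<\varepsilon/2$ once $T_0$ is large enough. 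For $t<T_0$, one bounds $\max(R_{n,t}(x),R_{n,t}(Tx))$ by $\exp(-(\lambda\cdot S_n(x)-2|\lambda|_1)/T_0)$, which tends to $0$ whenever $\lambda\cdot S_n(x)\to\infty$; the latter holds $\nu$-a.e.\ because Proposition 1.2.2 of \cite{A97} guarantees $\sum_{k\geq 0}1_{A_i}\circ T^k=\infty$ $\mu$-a.e.\ for every $i$ with $\mu(A_i)>0$, and $\lambda$ has at least one positive component.

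Granted condition (i), Lemma \ref{lem:int} yields $|\int(u_t-\rho)g_{\lambda,t}\,d\mu|=o(t)$. Combining with $(1-e^{-at^{-1}})\sim at^{-1}$ and $\int u_t g_{\lambda,t}\,d\mu=O(t)$, the left-hand side becomes
\[
at^{-1}Q_i(at^{-1})\int u_t g_{\lambda,t}\,d\mu\cdot(1+o(1))+O(1)=at^{-1}Q_i(at^{-1})\int\rho\,g_{\lambda,t}\,d\mu+o(Q_i(at^{-1})).
\]
The lower bound $\widetilde{S}_{\lambda,n,t}\geq\exp(-|\lambda|_\infty n/t)$ gives $\int\rho\,g_{\lambda,t}\,d\mu\geq t/(b+|\lambda|_\infty)\cdot(1+o(1))$, so the right-hand side is at least $c\,Q_i(at^{-1})$ for some $c>0$ and all sufficiently large $t$. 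The error term $o(Q_i(at^{-1}))$ is therefore $o$ of the right-hand side, proving the asserted asymptotic equivalence.
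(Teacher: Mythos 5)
Your proposal is correct and follows essentially the same route as the paper: you rewrite the sum over $B_{i,n}$ via Lemma \ref{lem:excur} as $(1-e^{-at^{-1}})Q_i(at^{-1})$ times a density $u_t\in\bar{\rm co}(\mathfrak{H}_i)$, verify hypothesis (i) of Lemma \ref{lem:int} for $R_{n,t}=\widetilde{S}_{\lambda,n,t}$ using that $\lambda\cdot S_n\to\infty$ a.e.\ (the paper does this via monotonicity in $t$, you via a two-regime bound --- same content), and then apply Lemma \ref{lem:int} with $\mathfrak{H}_i\cup\{d\nu/d\mu\}$ together with the lower bound $\sum_n e^{-nbt^{-1}}\int\widetilde{S}_{\lambda,n,t}\,d\nu\gtrsim ct$ and $Q_i(at^{-1})\to\infty$. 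Your Abel-summation bookkeeping, with the explicit $O(t^{-1})$ remainder $(1-e^{-at^{-1}})\widehat{T}1_{Y_1\cap A_i}$, is if anything a slightly more careful rendering of the paper's identity (\ref{eq:ii}), and all error terms are handled correctly.
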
}
{\color{black}
\begin{proof}
We will write $\widetilde{S}_{n,t}$ instead of $\widetilde{S}_{\lambda,n,t}$ for simplicity.
First of all, we will show that, for any probability measure $\nu\ll\mu$ and $\varepsilon>0$,
\begin{align}\label{cor:int-0}
 	\nu\Big\{\sup_{t>0}|\widetilde{S}_{n,t}\circ T - \widetilde{S}_{n,t}|>\varepsilon\Big\}\to 0,
	\quad\text{as $n\to\infty$}.
\end{align}
We may assume that $\lambda\neq (0,\dots,0)$. Note that
\begin{align}\label{cor:int-1}
	\big|\widetilde{S}_{n,t}\circ T-
	 \widetilde{S}_{n,t}\big|
	\leq\Big( 1-\exp(-t^{-1}\max_{i=1,\cdots,d} \lambda_i )\Big) \widetilde{S}_{n-1,t}\circ T.
\end{align}
We can take $t_0\in(0,\infty)$ so that
\begin{align}\label{cor:int-2}
	\sup_{t\geq t_0} \Big(1-\exp(-t^{-1}\max_{i=1,\cdots,d} \lambda_i)\Big)
	=
	1-\exp(-t_0^{-1}\max_{i=1,\cdots,d} \lambda_i) <\varepsilon.
\end{align}
Since $T$ is a CEMPT, we have 
\begin{align}\label{cor:int-3}
	\sup_{0<t\leq t_0}(\tilde{S}_{n-1,t}\circ T)
	=
	\tilde{S}_{n-1,t_0}\circ T
	\to
	0,
	\quad
	\text{$\mu$-a.e., as $n\to\infty$.}
\end{align}
By (\ref{cor:int-1}), (\ref{cor:int-2}) and (\ref{cor:int-3}), we obtain (\ref{cor:int-0}).

Set
\begin{align*}
  u_t
   :=\frac{\sum_{n\geq1}e^{-nat^{-1}}
        \widehat{T}^{n}1_{Y_n\cap A_i}}
        {Q_i(at^{-1})},
        \quad t>0.	
\end{align*}
By (\ref{hatwA}), we have $u_t\in \bar{\rm co}(\mathfrak{H}_i)$ (see Remark 3.5 of \cite{Z07}).
By Lemma \ref{lem:excur}, we have
\begin{align}\label{eq:ii}
   \sum_{n\geq2}
    (1-e^{-nat^{-1}})\widehat{T}^n 1_{B_{i,n}}
   =e^{at^{-1}}(1-e^{-at^{-1}})
    {\color{black}Q_i(at^{-1})u_t},
    \quad\text{$\mu$-a.e.}
\end{align}
Applying Lemma \ref{lem:int} to $R_{n,t} = \widetilde{S}_{n,t}$ and $\mathfrak{H}=\mathfrak{H}_i\cup\{d\nu/d\mu\}$, we have
\begin{align}
	\bigg|\int_{X} 
        \bigg(\sum_{n\geq0}e^{-nbt^{-1}}
          \widetilde{S}_{n,t}\bigg)\Big(u_t-\frac{d\nu}{d\mu}\Big)d\mu\bigg|
        = o(t),
        \quad\text{as $t\to\infty$.}
\end{align}
It follows immediately that
\begin{align}\label{eq:iii}
	\int_{Y} 
        \bigg(\sum_{n\geq0}e^{-nbt^{-1}}
          \widetilde{S}_{n,t}\bigg)u_td\mu
          \sim 
        \sum_{n\geq0}e^{-nbt^{-1}}
          \int_{X}\widetilde{S}_{n,t}d\nu,
        \quad\text{as $t\to\infty$.}
\end{align}
Here we used the fact that $u_t$ is supported on $Y$ and that $0<\widetilde{S}_{n,t}\leq 1$.
By (\ref{eq:ii}) and (\ref{eq:iii}), we obtain (\ref{eq:i}).
\end{proof}}

\section{Proof of the direct limit theorem}\label{sec:pf}

\begin{Prop}\label{prop:dob-lap}
Let $T$ be a CEMPT on $(X,\cA,\mu)$ and suppose that Assumptions \ref{ass:ray} {\color{black}and \ref{ass:comp}} hold. Then, for any $q>0$, 
$\lambda=(\lambda_1,\dots,\lambda_d)\in {\color{black}[0,\infty)^d}$ and for any probability measure $\nu\ll\mu$,  as $t\to\infty$, 
\begin{align}\label{eq:dob-Lap}
	\int_0^\infty du\:e^{-qu}
      \int_X d\nu \:\widetilde{S}_{\lambda,\lfloor ut \rfloor,t}                  
     \sim
     \frac{\sum_{i=1}^d {\color{black}Q_i}
                 \big((q+\lambda_i)t^{-1} \big)}
          {\sum_{i=1}^d (q+\lambda_i) {\color{black}Q_i}
                 \big((q+\lambda_i)t^{-1} \big)},
\end{align}
{\color{black} where $\widetilde{S}_{\lambda,n,t}$ \color{black}and $Q_i(s)$ have been defined by {\rm (\ref{tildeS})} and {\rm (\ref{hatwA})}, respectively.}
\end{Prop}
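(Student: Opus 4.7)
The plan is to convert the double Laplace transform into a single sum, derive a renewal-type identity on $Y$ by a first-excursion decomposition, and close it using Corollary~\ref{cor:int}. Set $\Phi_t(x):=\sum_{n\geq 0}e^{-nq/t}\widetilde{S}_{\lambda,n,t}(x)$ and $s'_i:=(q+\lambda_i)/t$. Partitioning $[0,\infty)$ into the intervals $[n/t,(n+1)/t)$ one has
\[
\int_0^\infty e^{-qu}\,\widetilde{S}_{\lambda,\lfloor ut\rfloor,t}\,du = \frac{1-e^{-q/t}}{q}\,\Phi_t,
\]
and since $(1-e^{-q/t})/q\sim t^{-1}$, the task reduces to showing that $\int_X\Phi_t\,d\nu \sim t\bigl(\sum_i Q_i(s'_i)\bigr)\big/\bigl(\sum_i (q+\lambda_i) Q_i(s'_i)\bigr)$.

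Next I would decompose $\Phi_t|_Y$ by the first excursion from $Y$. For $x\in B_{i,k}$ with $k\geq 2$, the identity $S_n(x)=(k-1)e^{(i)}+S_{n-k}(T^k x)$ for $n\geq k$ produces
\[
\Phi_t(x)=\frac{1-e^{-k s'_i}}{1-e^{-s'_i}}+e^{\lambda_i/t-k s'_i}\Phi_t(T^k x),
\]
while on $B_0:=Y\cap T^{-1}Y$ one has $\Phi_t(x)=1+e^{-q/t}\Phi_t(Tx)$. Writing $\Phi_t=N+R$ accordingly on $Y$ and integrating, the \emph{mass} piece $\int_Y N\,d\mu$ simplifies by Lemma~\ref{lem:excur} and an Abel summation: one checks that $\sum_{k\geq 2}\mu(B_{i,k})(1-e^{-ks'_i})/(1-e^{-s'_i}) = \mu(Y_1\cap A_i)+Q_i(s'_i)$, whence
\[
\int_Y N\,d\mu = \mu(B_0)+\sum_i\bigl[\mu(Y_1\cap A_i)+Q_i(s'_i)\bigr] = \mu(Y)+\sum_i Q_i(s'_i),
\]
the last step using $B_0\cup Y_1=T^{-1}Y$ together with the measure-preservation identity $\widehat T 1_{T^{-1}Y}=1_Y$. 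Rewriting $R$ through the dual operator, using the excursion identity $\sum_{k\geq 2}\widehat T^k 1_{B_{i,k}}=\widehat T 1_{Y_1\cap A_i}$ and once more $\widehat T 1_{B_0}+\sum_i\widehat T 1_{Y_1\cap A_i}=1_Y$, the quantity $\int_Y\Phi_t\,d\mu$ cancels on both sides of $\int_Y\Phi_t\,d\mu=\int_Y N\,d\mu+\int_Y R\,d\mu$, leaving the key identity
\[
\sum_i e^{\lambda_i/t}\int_Y K_i\,\Phi_t\,d\mu = \mu(Y)+\sum_i Q_i(s'_i)+\mathcal{E}(t),\quad K_i:=\sum_{k\geq 2}(1-e^{-ks'_i})\widehat T^k 1_{B_{i,k}},
\]
where $\mathcal{E}(t)$ is a finite combination of terms of the form $(e^{-q/t}-1)\int\widehat T 1_{B_0}\Phi_t\,d\mu$ and $(e^{\lambda_i/t}-1)\int\widehat T 1_{Y_1\cap A_i}\Phi_t\,d\mu$. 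Since $\|\Phi_t\|_\infty\leq (1-e^{-q/t})^{-1}=O(t)$ while $|e^{-q/t}-1|,|e^{\lambda_i/t}-1|=O(1/t)$, one obtains $\mathcal{E}(t)=O(1)$.

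To conclude, Corollary~\ref{cor:int} applied with $a=q+\lambda_i$ and $b=q$ gives, for every probability measure $\nu\ll\mu$, $\int_Y K_i\,\Phi_t\,d\mu \sim (q+\lambda_i) t^{-1}Q_i(s'_i)\int_X\Phi_t\,d\nu$. Because each $Q_i(s'_i)\to\infty$ and the factors $e^{\lambda_i/t}\to 1$, the bounded terms $\mu(Y)$ and $\mathcal{E}(t)=O(1)$ are negligible against $\sum_i Q_i(s'_i)$; the key identity therefore becomes
\[
\Bigl(\int_X\Phi_t\,d\nu\Bigr)\cdot t^{-1}\sum_i(q+\lambda_i)Q_i(s'_i) \sim \sum_i Q_i(s'_i),
\]
from which the required asymptotic for $\int_X\Phi_t\,d\nu$, and hence the proposition, follows. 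The main obstacle is the Abel-summation evaluation of $\int_Y N\,d\mu$: that this integral comes out exactly to $\mu(Y)+\sum_i Q_i(s'_i)$ is what makes the dominant term reproduce the numerator on the right-hand side of the proposition, while every other contribution appearing in the key identity is of strictly lower order.
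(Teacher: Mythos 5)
Your proposal is correct and follows essentially the same route as the paper's proof: reduce the double Laplace transform to the discrete sum $\Phi_t=\sum_{n\geq0}e^{-nq/t}\widetilde S_{\lambda,n,t}$, decompose by the first excursion from $Y$ to arrive at the key identity $\sum_i\int_Y K_i\,\Phi_t\,d\mu=\sum_i Q_i\big((q+\lambda_i)t^{-1}\big)+O(1)$ (the paper's (\ref{eq:3})), and close with Corollary \ref{cor:int} together with $Q_i(s)\to\infty$ as $s\downarrow0$. The only difference is bookkeeping: you treat $B_0=Y\cap T^{-1}Y$ explicitly and cancel $\int_Y\Phi_t\,d\mu$ exactly via $\widehat T 1_{T^{-1}Y}=1_Y$, whereas the paper compares $\int_{Y\cap T^{-1}Y^c}$ with $\int_{Y^c\cap T^{-1}Y}$ using measure preservation and the uniform bound on $|\widetilde S_{n,t}-\widetilde S_{n,t}\circ T|$ --- an inessential variation.
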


\begin{proof}
We will write $\widetilde{S}_{n,t}$ instead of $\widetilde{S}_{\lambda,n,t}$ for simplicity.  
{\color{black}
We have
  \begin{align}\label{eq:a}
     &\bigg|\int_0^\infty du\:e^{-qu}
     \int_X d\nu \:\widetilde{S}_{\lfloor ut \rfloor,t}
     -
     t^{-1} \sum_{n\geq0} e^{-qnt^{-1}}
      \int_X \widetilde{S}_{n,t}d\nu \bigg|
   \notag\\
   &= \bigg|\sum_{n\geq0}\int_{nt^{-1}}^{(n+1)t^{-1}} 
       du \:(e^{-qu}-e^{-qnt^{-1}})
       \int_X d\nu\:\widetilde{S}_{n,t}\bigg| \notag\\
   &\leq  
       \sum_{n\geq0} t^{-1}(e^{-qnt^{-1}}-e^{-q(n+1)t^{-1}}) =t^{-1}.
   \end{align}
Combining (\ref{eq:a}) with the fact that}
\begin{align}\label{ineq:bdd}
\big(q+\max_{i=1,\cdots,d}\lambda_i\big)^{-1}
\leq
   \int_0^\infty du\: e^{-qu}\int_X d\nu\: 
   \widetilde{S}_{\lfloor ut \rfloor,t}
\leq 
q^{-1},	 
\end{align}
{\color{black}we have, 
as $t\to\infty$,}
\begin{align}\label{sim:dob-Lap}
\int_0^\infty du\:e^{-qu}
     \int_X d{\color{black}\nu} \:\widetilde{S}_{\lfloor ut \rfloor,t}
     \sim t^{-1} \sum_{n\geq0} e^{-qnt^{-1}}
      \int_X \widetilde{S}_{n,t}d{\color{black}\nu}.
\end{align}

Recall that the $B_{i,k}$ have been defined by (\ref{Bik}). For each $n\geq1$ and $i=1,\dots,d$, the $\widetilde{S}_{n,t}$ may be represented on $\sum_{k\geq2}B_{i,k}\;(=Y\cap T^{-1}A_i,$ $\mu$-a.e.) as
\begin{equation}\label{eq:dissec}
	\widetilde{S}_{n,t} =
	\begin{cases}
	  e^{-(k-1)\lambda_i t^{-1}}\widetilde{S}_{n-k,t}\circ T^k 
	  &	\text{on }B_{i,k}\;(k=2,\cdots,n),
	  \\
	  e^{-n\lambda_i t^{-1}}
	  & \text{on }\sum_{k>n}B_{i,k}.
	\end{cases}
\end{equation}
By (\ref{eq:dissec}), for each $n\geq1$
and $i=1,\dots, d$, we have
\begin{align*}
	e^{-nqt^{-1}}
	&\int_{Y\cap T^{-1}A_i} \widetilde{S}_{n,t}d\mu 
	\\ 
	&=e^{-nqt^{-1}}\bigg\{
	  \sum_{k=2}^n\int_{B_{i,k}}\Big(
	   e^{-(k-1)\lambda_i t^{-1}}\widetilde{S}_{n-k,t}\circ T^k 
	   \Big)d\mu
	  +
	  e^{-n\lambda_i t^{-1}}
	   \sum_{k>n} \mu(B_{i,k}) \bigg\}
	   \\
	&= e^{\lambda_i t^{-1}}\int_Y \sum_{k=2}^n \Big(e^{-k(q+\lambda_i)t^{-1}}\widehat{T}^k 1_{B_{i,k}} \Big)
	    \Big(e^{-(n-k)q t^{-1}}\widetilde{S}_{n-k,t} \Big)d\mu  
	  \\
	  &\hspace{5mm}
	  +e^{-n(q+\lambda_i)t^{-1}}
	   \sum_{k>n} \mu(B_{i,k}).
\end{align*}
On one hand, summing over $n\geq1$ and $i=1,\dots,d$, we have by (\ref{eq:excur}) and (\ref{eq:excur-lap}),
\begin{align}\label{eq:1}
	\sum_{n\geq1}
	&e^{-nqt^{-1}}
	 \int_{Y\cap T^{-1}Y^c}  \widetilde{S}_{n,t}d\mu \notag\\
       &= \sum_{i=1}^d\;e^{\lambda_i t^{-1}}
       \int_Y
	   \bigg( \sum_{n\geq2}e^{-n(q+\lambda_i)t^{-1}}
	   \widehat{T}^n 1_{B_{i,n}} \bigg)
	   \bigg( \sum_{n\geq0} e^{-nqt^{-1}}\widetilde{S}_{n,t}
	   \bigg)d\mu 
	   +\sum_{i=1}^d
	   {\color{black}Q_i}\big((q+\lambda_i)t^{-1}\big)
	\notag\\
	&= \sum_{n\geq0} e^{-nqt^{-1}}
	      \int_{Y^c\cap T^{-1}Y}
	       \widetilde{S}_{n,t}\circ Td\mu
	   \notag\\
	   &\quad\quad
	   -
	   \sum_{i=1}^d
       \int_Y
	   \bigg( \sum_{n\geq2}(1-e^{-n(q+\lambda_i)t^{-1}})
	   \widehat{T}^n 1_{B_{i,n}} \bigg)
	   \bigg( \sum_{n\geq0} e^{-nqt^{-1}}\widetilde{S}_{n,t}
	   \bigg)d\mu 
	   \notag\\
	   &\quad\quad
	   +
	   \sum_{i=1}^d
	   {\color{black}Q_i}\big((q+\lambda_i)t^{-1}\big)+O(1),
	   \quad\text{as $t\to\infty$}.
\end{align}
Here we used the following estimates: 
\begin{align*}
\bigg\|\sum_{n\geq2}(e^{-n(q+\lambda_i)t^{-1}}-1)
	   \widehat{T}^n 1_{B_{i,n}}\bigg\|_{L^\infty (\mu)}
	&=O(1),
	\quad\text{as $t\to\infty$},
	   \\
     \bigg\|\sum_{n\geq0} e^{-nqt^{-1}}\widetilde{S}_{n,t}\bigg\|_{L^\infty(\mu)}
    &=O(t),
    \quad\text{as $t\to\infty$}.	
\end{align*}
These estimates are easily deduced from (\ref{eq:ii}) and $0\leq\widehat{T}^{n}1_{Y_n\cap A_i},\widetilde{S}_{n,t}\leq1$.
On the other hand, since $T$ is $\mu$-preserving and
\begin{align*}
	\big|\widetilde{S}_{n,t}-
	 \widetilde{S}_{n,t}\circ T\big|
	\leq 1-\exp(-t^{-1}\max_{i=1,\cdots,d} \lambda_i ),
\end{align*}
we have the following evaluation:
\begin{align}
	\int_{Y\cap T^{-1}Y^c}  
	   &\widetilde{S}_{n,t}d\mu
	   -\int_{Y^c\cap T^{-1}Y}
	       \widetilde{S}_{n,t}\circ Td\mu
    \notag\\
     &=\int_{Y}  
	   \widetilde{S}_{n,t}d\mu
	   -\int_{T^{-1}Y}
	       \widetilde{S}_{n,t}\circ Td\mu
	   -\int_{Y\cap T^{-1}Y}
	   \big( \widetilde{S}_{n,t}
	    -\widetilde{S}_{n,t}\circ T\big)d\mu
	 \notag\\
	 &=-\int_{Y\cap T^{-1}Y}
	   \big( \widetilde{S}_{n,t}
	    -\widetilde{S}_{n,t}\circ T\big)d\mu
	 \notag\\ \label{eq:2}
	 &=O(t^{-1}), \quad\text{as $t\to\infty$, uniformly in $n$}.
\end{align}
We now combine (\ref{eq:1}) and (\ref{eq:2}) and thus we obtain
\begin{align}\label{eq:3}
	\sum_{i=1}^d
       \int_Y
	   \bigg( \sum_{n\geq2}(1-e^{-n(q+\lambda_i)t^{-1}})
	   &\widehat{T}^n 1_{B_{i,n}} 
	   \bigg)
	   \bigg( \sum_{n\geq0} e^{-nqt^{-1}}\widetilde{S}_{n,t}
	   \bigg)d\mu
	   \notag
	   \\ 
	&
	=\sum_{i=1}^d {\color{black}Q_i}\big((q+\lambda_i)t^{-1}\big)
	   +O(1),\quad\text{as $t\to\infty$.}
\end{align}

By Corollary \ref{cor:int} and (\ref{sim:dob-Lap}), we obtain for each $i=1,\dots,d$, as $t\to\infty$,
\begin{align}\label{eq:4}
   \int_Y
	   \bigg( \sum_{n\geq2}(1-e^{-n(q+\lambda_i)t^{-1}})
	   &\widehat{T}^n 1_{B_{i,n}} 
	   \bigg)
	   \bigg( \sum_{n\geq0} e^{-nqt^{-1}}\widetilde{S}_{n,t}
	   \bigg)d\mu
     \notag\\
   &\sim
      {\color{black}
      (q+\lambda_i)Q_i
      \big( (q+\lambda_i)t^{-1} \big)
      \int_0^\infty du\:e^{-qu}
      \int_X d\nu \:\widetilde{S}_{\lfloor ut \rfloor,t}}.   
\end{align}
Note that ${\color{black}Q_i}(s)\to\infty$, as $s\downarrow0$. Summing up {\color{black}(\ref{eq:4})} over $i=1,\dots,d$, and combining it with {\color{black}(\ref{eq:3})} and (\ref{ineq:bdd}), we obtain (\ref{eq:dob-Lap}).
\end{proof}

We now prove Theorem \ref{thm:main}.

\begin{proof}[Proof of Theorem \ref{thm:main}]
By Assumption \ref{ass:reg} and Remark \ref{rem:reg-lap}, for any $q>0$ and $\lambda_i\geq0$,  
\begin{align*}
   \lim_{t\to\infty}
    \frac{{\color{black}Q_i}\big((q+\lambda_i)t^{-1}\big)}
          {{\color{black}Q}(t^{-1})} 
   = \beta_i(q+\lambda_i)^{-(1-\alpha)}.
\end{align*}
Hence Proposition \ref{prop:dob-lap} implies for any $q>0$, $\lambda=(\lambda_1,\dots,\lambda_d)\in{\color{black}[0,\infty)^d}$ and for any probability measure $\nu\ll\mu$,
\begin{align}
  \lim_{t\to\infty}
  \int_0^\infty du\:e^{-qu}
      \int_X d\nu \: \widetilde{S}_{\lambda,\lfloor ut \rfloor,t}
  =
  \frac{\sum_{i=1}^d \beta_i(q+\lambda_i)^{-(1-\alpha)}}
          {\sum_{i=1}^d \beta_i (q+\lambda_i)^\alpha }.
\end{align}
By Proposition \ref{prop:dob-lap-arc} and Lemma \ref{lem:main-a}, we obtain $S_n/n \overset{\cL(\mu)}{\Longrightarrow} \zeta_{\alpha,\beta}$.
\end{proof}

\section{Proof of the inverse limit theorem}\label{sec:pf2}

For the proof of the inverse limit theorem, we mimic the method of Theorem 1 of Watanabe \cite{W95}.
For $0<a<b<\infty$, let the map $f:[0,\infty]\to [b^{-1},a^{-1}]$ be defined by
\begin{align*}
f(x):=\frac{x+1}{bx+a}=\frac{1}{b}+\frac{(b-a)}{b(bx+a)}.	
\end{align*}
Then $f$ is a homeomorphism. Hence for $(x_n)_{n\geq0}\subset {\color{black}[0,\infty)}$ and $x\in[0,\infty]$, $x_n\to x$ if and only if $f(x_n)\to f(x)$.
Recall that $e^{(i)}=(1_{\{i=j\}})_{j=1}^d \in [0,1]^d$ for $i=1,\dots,d$.

\begin{proof}[Proof of Theorem \ref{thm:main2}]
We investigate the asymptotics of the occupation times on the first ray $A_1$, say. For this purpose, we take a special look at the first ray. Set for $t,q,r>0$,
\begin{align}
	W_t(q,r):=
	\frac{{\color{black}Q_1}\big((q+r)t^{-1}\big)}
    {\sum_{i=2}^d{\color{black}Q_i}\big(qt^{-1}\big)}.
\end{align}
Assume that $S_n/n \overset{\nu_0}{\Longrightarrow} \zeta$,
then for any $q>0$ and $\lambda=(\lambda_1,\dots,\lambda_d)\in{\color{black}[0,\infty)^d}$, the left-hand side of (\ref{eq:dob-Lap}) converges as $t\to\infty$, 
and this implies for any $q,r>0$, 
\begin{align*}
	W(q,r):=\lim_{t\to\infty} W_t(q,r)\in[0,\infty]
\end{align*}
exists, since we have, by substituting $re^{(1)}$ for $\lambda$ in the right-hand side of (\ref{eq:dob-Lap}), 
\begin{align*}
	\frac{{\color{black}{Q}_1}\big((q+r)t^{-1}\big) 
	      + \sum_{i=2}^d {\color{black}Q_i} (qt^{-1})}
         {(q+r){\color{black}Q_1}\big((q+r)t^{-1}\big)
          + q \sum_{i=2}^d {\color{black}Q_i} (qt^{-1})}
    =
    \frac{W_t(q,r)+1}{(q+r)W_t(q,r)+q}.
\end{align*}
Note that $W(q,\cdot)$ is a non-increasing function for any $q>0$ and $W(pq,r)=W(q,p^{-1}r)$ for any $p,q,r>0$. 
We divide the study into the following three distinct cases:
\begin{enumerate}
 \item[(i)] the case where $W(q_0,r_0)=\infty$ for some $q_0,r_0>0$.

In this case, by the above remark, $W(q,r)=W(q_0,q_0q^{-1}r)=\infty$ for any $q\geq q_0$ and $r \leq r_0$.
Hence by Proposition \ref{prop:dob-lap}, for any $q\geq q_0$ and $r\leq r_0$ and for any probability measure $\nu\ll\mu$,
\begin{align}
	\int_0^\infty du\:e^{-qu}
      \int_X d\nu \:\exp\pmat{-re^{(1)}\cdot S_{\lfloor ut \rfloor}/t }
      \to
      (q+r)^{-1}, \quad\text{as $t\to\infty$}.     
\end{align}
{\color{black}By a slight modification of Lemma \ref{lem:main-a}, we have} $n^{-1}\sum_{k=1}^n 1_{A_1}\circ T^k \overset{\cL(\mu)}{\Longrightarrow}1$.
{\color{black}Hence, by Proposition \ref{prop:dob-lap}, we have} ${\color{black}Q_1}(s)/{\color{black}Q}(s)\to 1$, as $s\downarrow0$. 
By Remark \ref{rem:birkoff},
\begin{align}\label{eq:conv-to-1}
\sum_{i=1}^d n^{-1} \sum_{k=1}^n 1_{A_i} \circ T^k \to 1, 
     \quad\text{$\mu$-a.e., as $n \to \infty$.} 
\end{align}
Hence we obtain $S_n/n \overset{\cL(\mu)}{\Longrightarrow}e^{(1)}$.
   
 \item[(ii)] the case where $W(q_0,r_0)=0$ for some $q_0,r_0>0$. 
 
 By the similar arguments as (i), we may obtain $n^{-1}\sum_{k=1}^n1_{A_1}\circ T^k \overset{\cL(\mu)}{\Longrightarrow}0$ and ${\color{black}Q_1}(s)/{\color{black}Q}(s)\to0$, as $s\downarrow 0$.

 \item[(iii)] the case where $W(q,r)\in(0,\infty)$ for any $q,r>0$.

For $p>0$, set $q:=2^{-1}\min\{p,1\},\; r:=p-q,$ and $r':=1-q$. Then
\begin{align*}
	\frac{{\color{black}Q_1}(ps)}{{\color{black}Q_1}(s)}
	=\frac{W_{s^{-1}}(q,r)}{W_{s^{-1}}(q,r')}
	\to
	\frac{W(q,r)}{W(q,r')} \in (0,\infty),
	\quad \text{as $s\downarrow0$},
\end{align*}
Hence $\color{black}Q_1$ is regularly varying at 0 (see Theorem 1.4.1 of \cite{BGT}). Furthermore, by (\ref{eq:excur-lap}) and since $n\mapsto\sum_{k>n}\mu(B_{i,k})$ is positive and non-increasing, the index of regular variation must be in $[-1,0]$, i.e., ${\color{black}Q_1}\in\cR_{-(1-\alpha)}(0+)$ for some $\alpha\in[0,1]$. By the definition and assumption of $W(q,r)$, this implies, for some constant ${\color{black}c\in(0,\infty)}$,
\begin{align*}
 \sum_{i=2}^d{\color{black}Q_i}(s)
   \sim c {\color{black}Q_1}(s),
   \quad \text{as $s\downarrow0$}. 	
\end{align*}
Therefore ${\color{black}Q}(s)\in\cR_{-(1-\alpha)}(0+)$ and ${\color{black}Q_1}(s)\sim (1+c)^{-1} {\color{black}Q}(s)$, as $s \downarrow 0$.

\end{enumerate}

The above argument works for the other rays. For $j=1,\dots,d$ and for $q,r>0$,
\begin{align}
  W^{(j)}(q,r)
	:=\lim_{t\to\infty}
	  \frac{{\color{black}Q_j} \big((q+r)t^{-1}\big)}
	{\sum_{i\neq j}{\color{black}Q_i} (qt^{-1})}
    \in[0,\infty]
\end{align}
exists. 

If $W^{(j)}(q_0,r_0)=\infty$ for some $j=1,\dots,d$ and for some $q_0,r_0>0$, then by the same arguments as (i), $S_n/n \overset{\cL(\mu)}{\Longrightarrow}e^{(j)}$.

Let us consider the other cases. If for any $j=1,\dots,d$, the function $W^{(j)}$ has zeros, then by the same argument as (ii) we obtain 
$S_n/n\overset{\cL(\mu)}{\Longrightarrow}0$, but this contradicts (\ref{eq:conv-to-1}).
So there exists $j\in\{1,\dots,d\}$ such that $W^{(j)}(q,r)\in(0,\infty)$ for any $q,r>0$.
Then by the same arguments as (ii) and (iii), and by Remark \ref{rem:reg-lap}, 
there exist some constants $\alpha \in [0,1]$ 
and $\beta = (\beta_1, \dots, \beta_d) \in[0,1)^d$ with $\sum_{i=1}^d\beta_i=1$
such that (\ref{eq:reg}) and (\ref{eq:bal}) hold.
Hence by Theorem \ref{thm:main}, we obtain $S_n/n \overset{\cL(\mu)}{\Longrightarrow} \zeta_{\alpha,\beta}$.
\end{proof}


\begin{thebibliography}{10}

\bibitem{A97}
J.~Aaronson.
\newblock {\em An introduction to infinite ergodic theory}, volume~50 of {\em
  Mathematical Surveys and Monographs}.
\newblock American Mathematical Society, Providence, RI, 1997.

\bibitem{BPY}
M.~Barlow, J.~Pitman and M.~Yor.
\newblock Une extension multidimensionnelle de la loi de l'arc sinus.
\newblock In {\em S\'eminaire de {P}robabilit\'es, {XXIII}}, volume 1372 of
  {\em Lecture Notes in Math.}, pages 294--314. Springer, Berlin, 1989.

\bibitem{BGT}
N.~H. Bingham, C.~M. Goldie and J.~L. Teugels.
\newblock {\em Regular variation}, volume~27 of {\em Encyclopedia of
  Mathematics and its Applications}.
\newblock Cambridge University Press, Cambridge, 1987.

\bibitem{F1}
W.~Feller.
\newblock {\em An introduction to probability theory and its applications.
  {V}ol. {I}}.
\newblock Third edition. John Wiley \& Sons, Inc., New York-London-Sydney,
  1968.


\bibitem{F2}
W.~Feller.
\newblock {\em An introduction to probability theory and its applications.
  {V}ol. {II}}.
\newblock Second edition. John Wiley \& Sons, Inc., New York-London-Sydney,
  1971.

%
%
%
\bibitem{La}
J.~Lamperti.
\newblock An occupation time theorem for a class of stochastic processes.
\newblock {\em Trans. Amer. Math. Soc.}, 88:380--387, 1958.

\bibitem{Le}
P.~L\'evy.
\newblock Sur certains processus stochastiques homog\`enes.
\newblock {\em Compositio Math.}, 7:283--339, 1939.



\bibitem{T80}
M.~Thaler.
\newblock Estimates of the invariant densities of endomorphisms with
  indifferent fixed points.
\newblock {\em Israel J. Math.}, 37(4):303--314, 1980.

\bibitem{T83}
M.~Thaler.
\newblock Transformations on {$[0,\,1]$}\ with infinite invariant measures.
\newblock {\em Israel J. Math.}, 46(1-2):67--96, 1983.



\bibitem{T02}
M.~Thaler.
\newblock A limit theorem for sojourns near indifferent fixed points of
  one-dimensional maps.
\newblock {\em Ergodic Theory Dynam. Systems}, 22(4):1289--1312, 2002.

\bibitem{TZ}
M.~Thaler and R.~Zweim\"uller.
\newblock Distributional limit theorems in infinite ergodic theory.
\newblock {\em Probab. Theory Related Fields}, 135(1):15--52, 2006.

\bibitem{W95}
S.~Watanabe.
\newblock Generalized arc-sine laws for one-dimensional diffusion processes and
  random walks.
\newblock In {\em Stochastic analysis ({I}thaca, {NY}, 1993)}, volume~57 of
  {\em Proc. Sympos. Pure Math.}, pages 157--172. Amer. Math. Soc., Providence,
  RI, 1995.

\bibitem{Y17}
Y.~Yano.
\newblock On the joint law of the occupation times for a diffusion process on
  multiray.
\newblock {\em J. Theoret. Probab.}, 30(2):490--509, 2017.

\bibitem{Z07}
R.~Zweim\"uller.
\newblock Infinite measure preserving transformations with compact first
  regeneration.
\newblock {\em J. Anal. Math.}, 103:93--131, 2007.


\end{thebibliography}
\end{document}